\documentclass[a4paper, 11pt, reqno]{amsart}
\usepackage[english]{babel}
\usepackage[utf8]{inputenc}
\usepackage{amsmath}
\usepackage{amssymb}
\usepackage{mathtools} %Needed for \coloneqq
\usepackage{color}
\usepackage{nicefrac}
\DeclareMathOperator{\sgn}{sgn}
\usepackage{tikz-cd}
\usepackage{listings}
\usepackage{float}
\usepackage{a4wide}
\usepackage{amsrefs}
\usepackage{amsthm}
\usepackage{mathrsfs}
\usepackage{comment}
\usepackage{leftindex}
\usepackage{mathtools}
\usepackage{hyperref}

\theoremstyle{plain}
\newtheorem{thm}{Theorem}[section]
\newtheorem{prop}[thm]{Proposition}
\newtheorem{lemma}[thm]{Lemma}

\theoremstyle{definition}
\newtheorem{defn}[thm]{Definition}

\theoremstyle{remark}
\newtheorem*{rmk}{Remark}

\newcommand{\C}{\mathbb{C}}
\renewcommand{\H}{\mathbb{H}}
\newcommand{\Z}{\mathbb{Z}}

\newcommand{\N}{\mathbb{N}}

\newenvironment{proofa}{%
  \proof}{\endproof}

\makeatletter
\let\@@pmod\pmod
\DeclareRobustCommand{\pmod}{\@ifstar\@pmods\@@pmod}
\def\@pmods#1{\mkern4mu({\operator@font mod}\mkern 6mu#1)}
\makeatother

\title{Local Weak Maass Forms and Traces of Cycle Integrals}
\author{Kilian Rausch}
\author{Johann Stumpenhusen}
\address{Department of Mathematics and Computer Science, Division of Mathematics, University of Cologne, Weyertal 86-90, 50931 Cologne, Germany}
\email{krausch1@uni-koeln.de}
\email{jstumpen@math.uni-koeln.de}
\date{\today}
\subjclass[2020]{11F11, 11E16, 11F37}
\keywords{Cycle integrals, Local weak Maass forms, Maass raising and lowering operators}
\begin{document}
 
\begin{abstract}
    Using relations established by Bringmann and Kane in a recent preprint based on the function $\omega_{k+1,D}$ studied by Mono, Rolen, and the second author, we introduce two further functions naturally associated with a construction by Bringmann and Mono. We exploit results by Löbrich--Schwagenscheidt, Mono, and the second author on representations of these functions as traces of cycles integrals in order to provide more identities of the same kind.
\end{abstract} 

\maketitle

\section{Introduction and Statement of Results}

\subsection{Modularly transforming forms related to real quadratic fields} Recently, Bringmann and Kane \cite{BringKane} simplified and extended results by Mono, Rolen, and the second author \cite{MonoRolenStum} concerning the functions
\[
\omega_{k+1,D}(z) \coloneqq \sum_{Q \in \mathcal{Q}_D} \frac{Q_z}{Q(z,1)^{k+1}}, \qquad k > 2, \, z = x + iy \in \H,
\]
where $D > 0$ is a fundamental discriminant, $\mathcal{Q}_D \coloneqq \{aX^2 + bXY + cY^2 : D = b^2 - 4ac\}$ is the set of all binary quadratic forms of discriminant $D$, and $Q_z \coloneqq y^{-1}(a|z|^2 + bx + c)$. These functions were inspired by the weight $2k$ holomorphic cusp forms
\[
f_{k,D}(z) \coloneqq \sum_{Q \in \mathcal{Q}_D} \frac{1}{Q(z,1)^k}, \qquad k > 2,\, z \in \H,
\]
which were introduced by Zagier \cite{zagier75}*{Appendix A} in 1975. The main result in \cite{MonoRolenStum} states that $\omega_{k+1,D}$ is a weight $2k + 2$ weak Maass form and there exists a splitting of $\omega_{k +1 ,D}$ into a holomorphic and a non-holomorphic part via
\[
\omega_{k+1,D}(z) = - i\sum_{Q \in \mathcal{Q}_D}\frac{Q'(z,1)}{Q(z,1)^{k+1}} + \frac{1}{y}f_{k,D}(z), \qquad Q'(z,1) \coloneqq \frac{\partial}{\partial z} Q(z,1).
\]

\subsection{Maass operators} In the theory of weak Maass forms, two differential operators play a crucial role: let $\kappa \in \Z$ and define the \emph{Maass raising operator of weight $\kappa,$} $R_\kappa \coloneqq  2i\frac{\partial}{\partial z} + \frac{\kappa}{y}$ as well as as the \emph{Maass lowering operator of weight $\kappa,$} $L_\kappa \coloneqq -2iy^2 \frac{\partial}{\partial \overline{z}}$. Bringmann and Kane showed that the connection between $f_{k,D}$ and $\omega_{k+1,D}$ is even tighter.
\begin{thm}[\cite{BringKane}*{Theorem 1.2}]\label{thm:BringKaneOmegaFRelation}
    It holds that
    \[\omega_{k+1,D}(z) = \frac{R_{2k}f_{k,D}(z)}{2k}\]
    for $z \in \H$ and $k > 2$ even.
\end{thm}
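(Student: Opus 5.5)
The plan is to verify the identity term by term, since both $R_{2k}$ and the definitions of $f_{k,D}$ and $\omega_{k+1,D}$ are compatible with the sum over $Q \in \mathcal{Q}_D$. First I justify applying $R_{2k}$ termwise to $f_{k,D}$. For $k>2$ the series $\sum_{Q\in\mathcal{Q}_D} |Q(z,1)|^{-k}$ converges locally uniformly on $\H$; this is Zagier's standard estimate, which also underlies holomorphy of $f_{k,D}$. Each summand is holomorphic, so by Weierstrass the series of derivatives $\sum_Q k\,Q'(z,1)Q(z,1)^{-k-1}$ also converges locally uniformly. Hence
\[
R_{2k} f_{k,D}(z) = \sum_{Q\in\mathcal{Q}_D} R_{2k}\bigl(Q(z,1)^{-k}\bigr).
\]

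Next I compute a single term. Since $Q(z,1)^{-k}$ is holomorphic,
\[
R_{2k}\bigl(Q(z,1)^{-k}\bigr) = -2ik\,\frac{Q'(z,1)}{Q(z,1)^{k+1}} + \frac{2k}{y}\,\frac{1}{Q(z,1)^k}.
\]
Dividing by $2k$ gives exactly the summand appearing in the holomorphic/non-holomorphic splitting of $\omega_{k+1,D}$ from \cite{MonoRolenStum}. Summing over $Q$ then already yields the theorem, provided one quotes that splitting.

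To keep the argument self-contained, I would instead check directly that each summand equals $Q_z/Q(z,1)^{k+1}$. Writing $Q=[a,b,c]$, one has
\[
\frac{-iQ'(z,1)}{Q(z,1)^{k+1}}+\frac{1}{yQ(z,1)^k} = \frac{Q(z,1)-iy\,Q'(z,1)}{y\,Q(z,1)^{k+1}}.
\]
The numerator is
\[
a(z^2-2iyz)+b(z-iy)+c = a\,z\bar z + bx + c = a|z|^2+bx+c = yQ_z.
\]
Thus each term equals $Q_z/Q(z,1)^{k+1}$, and summing over $\mathcal{Q}_D$ gives $\omega_{k+1,D}$. Absolute convergence of the $\omega$-series follows from the same estimate, since $|Q_z| \le |Q(z,1)|\,(\text{bounded}) + |Q'(z,1)|\,y$ locally.

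The only genuine obstacle is the analytic justification: locally uniform convergence of the differentiated series, which is needed to interchange $R_{2k}$ with the infinite sum. The algebra itself is a one-line identity. Parity of $k$ does not enter the computation; it only ensures that $f_{k,D}$ is not identically zero, so the statement is not vacuous.
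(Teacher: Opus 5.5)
Your proposal is correct and is essentially the intended argument. The paper does not reprove this statement but quotes it from Bringmann and Kane \cite{BringKane}. In the proof of Proposition~\ref{prop:BasicPropsCurlyJIota}(1)--(2) it obtains the analogous formula for $\iota_{k+2,D}$ in the same termwise way: it applies the raising operator to each summand, where $\sgn(Q_z)$ is locally constant, to get $|Q_z|/Q(z,1)^{k+2}$.

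One side remark needs correcting. Evenness of $k$ only prevents the trivial cancellation between the terms for $Q$ and $-Q$, which makes both sides vanish identically for odd $k$. It does not guarantee $f_{k,D}\not\equiv 0$; for example, $f_{4,D}\in S_8(\Gamma)=\{0\}$.
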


Let $\xi_\kappa \coloneqq -y^{\kappa - 2}\overline{L_\kappa}$ be the weight $\kappa$ \emph{shadow operator} and $\mathbb{D} \coloneqq \frac{\partial}{2\pi i \partial z}$ be another differential operator. For $\kappa \in \N$, we call $\mathbb{D}^{\kappa-1}$ the \emph{Bol operator} of weight $2 - \kappa$ which satisfies Bol's identity
\begin{equation*}\label{eq:BolsIdentity}
    \mathbb{D}^{\kappa-1} = (-4\pi)^{1-\kappa}R_{2-\kappa}^{\kappa-1}.
\end{equation*}

A decade ago, Bringmann, Kane, and Kohnen introduced the locally harmonic Maass form (see Subsection \ref{subsec:LocalMaass} for a definition)
\[\mathcal{F}_{1-k,D}: \H \to \C, \quad z \mapsto \frac{1}{2}\sum_{Q \in \mathcal{Q}_D}\sgn(Q_z)Q(z,1)^{k-1}\beta\left(\frac{Dy^2}{|Q(z,1)|^2};k - \frac{1}{2},\frac{1}{2}\right)\]
where
\[\beta(x;r,s) := \int_0^x t^{r-1}(1-t)^{s-1}\mathrm{d}t\]
is the incomplete $\beta$-function. It is a local preimage of $f_{k,D}$ under both the shadow and the Bol operator. Define
\[
\mathcal{W}_{-k,D}(z) := y^{2k}\overline{f_{k,D}(z)}.
\]
Bringmann and Kane also proved the following.
\begin{thm}[\cite{BringKane}*{Theorem 1.3}] \label{thm:RelationWf}
    Let $k > 2$ be even.
    \begin{enumerate}
        \item We have
        \[\mathcal{W}_{-k,D}(z) = D^{\frac{1}{2}-k}L_{2-2k}\mathcal{F}_{1-k,D}(z)\]
        for $z \in \H \setminus E_D$.
        \item Furthermore, it holds 
        \[\xi_{-2k}\mathcal{W}_{-k,D}(z) = 2k\omega_{k+1,D}(z)\]
        for $z \in \H$.
    \end{enumerate}
\end{thm}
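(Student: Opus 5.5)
Both parts come down to commuting the Maass operators past complex conjugation and the factor $y^{2k}$, and then using facts already established: Theorem \ref{thm:BringKaneOmegaFRelation} for part (2), and the fact that $\mathcal{F}_{1-k,D}$ is a local $\xi$-preimage of $f_{k,D}$ for part (1).

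\textbf{Part (2).} I treat this first because it is purely formal. Write $\xi_{-2k} = -y^{-2k-2}\overline{L_{-2k}}$, where $L_{-2k} = -2iy^2\partial_{\overline z}$. This gives
\[
\xi_{-2k}F = 2i\,y^{-2k}\,\overline{\partial_{\overline z}F} = 2i\,y^{-2k}\,\partial_z \overline F .
\]
Apply this with $F=\mathcal{W}_{-k,D}=y^{2k}\overline{f_{k,D}}$, so that $\overline F = y^{2k} f_{k,D}$. Using $\partial_z y = \frac{1}{2i}$, we get
\[
\partial_z\bigl(y^{2k}f_{k,D}\bigr) = y^{2k}f_{k,D}' + \tfrac{2k}{2i}\,y^{2k-1}f_{k,D}.
\]
Hence
\[
\xi_{-2k}\mathcal{W}_{-k,D} = 2i f_{k,D}' + \tfrac{2k}{y}f_{k,D} = R_{2k}f_{k,D}.
\]
Theorem \ref{thm:BringKaneOmegaFRelation} identifies the right-hand side with $2k\,\omega_{k+1,D}$. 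Both sides are real-analytic on all of $\H$, so the identity holds for every $z\in\H$.

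\textbf{Part (1).} Here I use the local property of $\mathcal{F}_{1-k,D}$ on each connected component of $\H\setminus E_D$, where it is real-analytic: there it satisfies $\xi_{2-2k}\mathcal{F}_{1-k,D} = c\,f_{k,D}$ for an explicit constant $c$.

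1. From $\xi_{2-2k} = -y^{-2k}\overline{L_{2-2k}}$ it follows that
\[
L_{2-2k}\mathcal{F}_{1-k,D} = -y^{2k}\,\overline{\xi_{2-2k}\mathcal{F}_{1-k,D}} = -\overline{c}\,y^{2k}\overline{f_{k,D}} = -\overline{c}\,\mathcal{W}_{-k,D}.
\]
So part (1) reduces to showing $-\overline{c} = D^{k-\frac12}$.

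2. The main obstacle is pinning down $c$ with the correct power of $D$ and the correct sign. The normalization in the definition of $\mathcal{F}_{1-k,D}$ gives $c$ only implicitly, so I would compute $L_{2-2k}$ of each summand directly. Away from $E_D$ the factor $\sgn(Q_z)$ is locally constant, and $Q(z,1)^{k-1}$ is holomorphic. The chain rule on the $\beta$-factor therefore gives
\[
L_{2-2k}\Bigl[Q(z,1)^{k-1}\beta(X;k-\tfrac12,\tfrac12)\Bigr] = -2iy^2\,Q(z,1)^{k-1}\,X^{k-\frac32}(1-X)^{-\frac12}\,\partial_{\overline z}X, \qquad X=\frac{Dy^2}{|Q(z,1)|^2}.
\]

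3. To simplify, I would use the identity
\[
|Q(z,1)|^2 - Dy^2 = y^2Q_z^2,
\]
which gives $1-X = y^2Q_z^2/|Q(z,1)|^2$, and so $(1-X)^{-1/2}$ contributes $|Q(z,1)|/(y|Q_z|)$. The factor $\sgn(Q_z)$ then combines with $|Q_z|^{-1}$ into $Q_z^{-1}$.

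4. For $\partial_{\overline z}X$, I would compute $\partial_{\overline z}\log X = \frac{i}{y} - \frac{\overline{Q'(z,1)}}{\overline{Q(z,1)}}$. This should combine with the relation $\overline{Q(z,1)}\,Q'(z,1)$-type identities linking $Q_z$, $Q(z,1)$ and $Q'(z,1)$, namely
\[
y\,\overline{Q'(z,1)} - i\,\overline{Q(z,1)} = \pm i\,y\,Q_z \cdot(\text{unit}),
\]
to cancel $Q_z$ exactly. After the cancellation each summand should be a constant times $D^{k-\frac12}y^{2k}\,\overline{Q(z,1)}^{\,-k}$. The half-integer power $D^{k-\frac12}$ comes from $X^{k-\frac32}\cdot X$ together with the $D$ inside $X$.

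5. Summing over $\mathcal{Q}_D$, the factor $\tfrac12$ in front of $\mathcal{F}_{1-k,D}$ should be absorbed by the pairing $Q\leftrightarrow -Q$. Since $k$ is even, $Q$ and $-Q$ give equal contributions, which yields exactly $y^{2k}\overline{f_{k,D}}$ times $D^{k-\frac12}$. Multiplying by $D^{\frac12-k}$ then gives part (1).

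The delicate step is the constant and sign bookkeeping in steps 3–4. To keep it manageable, I would verify the algebraic identities above for $Q=[a,b,c]$ directly.
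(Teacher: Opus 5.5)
\textbf{Part (2)} is correct and matches the intended argument. The paper quotes the theorem from Bringmann--Kane; in its framework, (2) is the second identity of Lemma \ref{lem:BringKaneLemmata}(3) with $\kappa=-2k$, using $y^{-2k}\overline{\mathcal{W}_{-k,D}}=f_{k,D}$, followed by Theorem \ref{thm:BringKaneOmegaFRelation}. You simply verify that identity by hand.

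There is one slip. From $\xi_{-2k}=-y^{-2k-2}\overline{L_{-2k}}$ one gets $-2iy^{-2k}\overline{\partial_{\overline z}F}$, not $+2iy^{-2k}\overline{\partial_{\overline z}F}$. The formula you actually use is the normalisation $\xi_\kappa=y^{\kappa-2}\overline{L_\kappa}$ of Subsection \ref{subsec:DifferentialOperators} and Lemma \ref{lem:BringKaneLemmata}(3). Statement (2) is true only in that normalisation; the minus sign in the introduction is inconsistent with those two places. In step 1 of part (1) you switch back to the minus sign, so your two parts use opposite conventions.

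\textbf{Part (1)} has a genuine gap. Its whole content is the constant $D^{k-\frac12}$, and you do not establish it.

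First, the constant is not ``only implicit''. Lemma \ref{lem:bkk} gives $\xi_{2-2k}\mathcal{F}_{1-k,D}=D^{k-\frac12}f_{k,D}$ off $E_D$, in the normalisation above. The first identity of Lemma \ref{lem:BringKaneLemmata}(3) then yields $L_{2-2k}\mathcal{F}_{1-k,D}=y^{2k}\overline{\xi_{2-2k}\mathcal{F}_{1-k,D}}=D^{k-\frac12}\mathcal{W}_{-k,D}$ at once, just as Proposition \ref{prop:LoweringOpCurlyG} is proved. Your step-1 relation $L_{2-2k}=-y^{2k}\overline{\xi_{2-2k}}$ would give the opposite sign here.

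Second, your direct computation is a legitimate alternative, but it stops at ``should'' and ``$\pm$ unit''. Its one concrete claim about the constant is also false. The pairing $Q\leftrightarrow -Q$ in step 5 cannot absorb the $\frac12$: $\mathcal{F}_{1-k,D}$ and $f_{k,D}$ both sum over all of $\mathcal{Q}_D$, and $Q,-Q$ contribute equally to each, so pairing leaves their ratio unchanged.

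What really happens is that the step-4 identity is exact, $y\overline{Q'(z,1)}-i\overline{Q(z,1)}=-iyQ_z$, so $\partial_{\overline z}X=\frac{iQ_z}{\overline{Q(z,1)}}X$. Combining this with $(1-X)^{-\frac12}=\frac{|Q(z,1)|}{y|Q_z|}$ and $\sgn(Q_z)Q_z/|Q_z|=1$, one gets, off $E_D$,
\[
L_{2-2k}\Bigl[\sgn(Q_z)\,Q(z,1)^{k-1}\beta\bigl(X;k-\tfrac12,\tfrac12\bigr)\Bigr]=(-2i)(i)\,y\,Q(z,1)^{k-1}X^{k-\frac12}\,\frac{|Q(z,1)|}{\overline{Q(z,1)}}=\frac{2D^{k-\frac12}y^{2k}}{\overline{Q(z,1)}^{\,k}}.
\]
Thus the $\frac12$ in front of $\mathcal{F}_{1-k,D}$ is cancelled by the factor $2=(-2i)\cdot i$ coming from $L_{2-2k}$ and $\partial_{\overline z}X$, not by a pairing. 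With this computation, plus locally uniform convergence to justify termwise differentiation, your route becomes a complete, self-contained proof of (1).
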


\begin{rmk}
    Being a locally harmonic Maass form, $\mathcal{F}_{1-k,D}$ satisfies $\Delta_{2-2k}\mathcal{F}_{1-k,D}(z) = 0$ where $\Delta_\kappa$ is the usual weight $\kappa$ Laplacian. Since $\Delta_\kappa = - R_{\kappa - 2} \circ L_\kappa$, we derive that $R_{-2k}\mathcal{W}_{-k,D}(z) = 0$ and furthermore, due to Bol's identity, $\mathbb{D}^{2k+1}\mathcal{W}_{-k,D}(z) = 0$. In fact, this follows from the decomposition of $\mathcal{F}_{1-k,D}$ into a linear combination of the holomorphic and anti-holomorphic Eichler integrals of $f_{k,D}$ as well as a period polynomial as shown in \cite{bkk}*{Theorem 1.3}.
\end{rmk}

\subsection{Cycle integrals}  Let $f$ be a smooth function on $\H$ transforming like a weight $\kappa$ modular form. The weight $\kappa$ cycle integral of $f$ with a binary quadratic form $Q$ is defined as
\[\mathcal{C}_\kappa(f(\tau),Q) := D^{\frac{1}{2}-\frac{\kappa}{4}}\int_{\Gamma_Q\backslash S_Q}f(\tau)Q(\tau,1)^{\frac{\kappa}{2}-1}\mathrm{d}\tau\]
where $S_Q$ is the semi-circle connecting the roots of the quadratic polynomial $Q(z,1)$ called \emph{Heegner geodesic} and the orientation of the integral is clockwise if
\[0 < \sgn([a,b,c]) := \begin{cases}
    \sgn(a) &\text{if }a \neq 0,\\
    \sgn(c) &\text{if }a = 0,
\end{cases}\]
and otherwise it is counter-clockwise.

A central aspect of cycle integrals is their appearance as hyperbolic analogues of Fourier coefficients which is encoded in the following theorem by Kohnen and Zagier.

\begin{lemma}[\protect{\cite{koza84}*{Section 3}, \cite{koh85}*{Proposition 7}}] \label{lem:HyperbolicPeriods}
Let $f \in S_{2k}(\Gamma)$. Then, we have
\begin{align*}
    \langle f, f_{k,D} \rangle = \pi \binom{2k-2}{k-1}2^{2-2k}D^{-\frac{k}{2}} \sum_{Q \in \mathcal{Q}_D \slash \Gamma} \mathcal{C}_{2k}(f,Q).
\end{align*}
\end{lemma}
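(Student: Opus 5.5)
We follow Kohnen--Zagier and unfold the Petersson inner product against the defining series of $f_{k,D}$. Write
\[
\langle f, f_{k,D}\rangle = \int_{\Gamma\backslash\H} f(z)\overline{f_{k,D}(z)}\,y^{2k}\,\frac{dx\,dy}{y^2}.
\]
Insert $f_{k,D} = \sum_{Q\in\mathcal{Q}_D}Q(z,1)^{-k}$. Since $D$ is fundamental, hence not a square, every stabilizer $\Gamma_Q$ is infinite cyclic (modulo $\pm 1$). We therefore split $\mathcal{Q}_D$ into the finitely many $\Gamma$-orbits and unfold each orbit:
\[
\langle f, f_{k,D}\rangle = \sum_{Q\in\mathcal{Q}_D/\Gamma}\int_{\Gamma_Q\backslash\H} f(z)\,\overline{Q(z,1)}^{-k}\,y^{2k-2}\,dx\,dy .
\]
This uses $f(\gamma z)\overline{(Q\circ\gamma)(z,1)}^{-k}\operatorname{Im}(\gamma z)^{2k} = f(z)\overline{Q(z,1)}^{-k}y^{2k}$. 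The interchange of sum and integral is justified by absolute convergence for $k>2$ together with the cusp decay of $f$. Both sides are $\Gamma$-invariant, so it suffices to treat one representative $Q$ per class.

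Next we move each $Q$ to a convenient shape. Choose $g\in \mathrm{SL}_2(\mathbb{R})$ with $Q\circ g^{-1} = \sqrt{D}\,XY$ (up to sign), so that $S_Q$ is sent to the imaginary axis. The group $\Gamma_Q$ becomes generated by $z\mapsto \varepsilon^2 z$, where $\varepsilon$ is the fundamental unit. We then use polar-type coordinates adapted to the geodesic: each point of $\H$ is written as $z = r e^{i\theta}$ in the transformed picture, or equivalently by its foot point on $S_Q$ and its signed distance from $S_Q$. A fundamental domain for $\Gamma_Q$ is $1\le r<\varepsilon^2$, $0<\theta<\pi$.

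In these coordinates the integrand factors into two pieces. The first is a function of the foot point $\tau\in S_Q$, namely $f(\tau)Q(\tau,1)^{k-1}d\tau$. The second is an explicit weight depending only on $\theta$. The cleanest way to obtain this is not to integrate $f$ directly off the geodesic. Instead, we write $f(z)$ for $z$ off $S_Q$ via Cauchy's theorem, or equivalently note that for holomorphic $f$ the function $r\mapsto f(re^{i\theta})(re^{i\theta})^{k-1}\ldots$ integrated over one period of $\Gamma_Q$ does not depend on $\theta$. That is the contour-shift argument, valid because the integrand is $\Gamma_Q$-invariant and holomorphic. So the $r$-integral at angle $\theta$ equals $e^{i(\ldots)\theta}$ times the cycle integral $D^{\frac12-\frac k2}\int_{\Gamma_Q\backslash S_Q} f(\tau)Q(\tau,1)^{k-1}d\tau$, that is, $\mathcal{C}_{2k}(f,Q)$ up to the normalization $D^{\frac12-\frac{k}{2}}$.

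What remains is a one-dimensional integral of the shape
\[
\int_0^\pi (\sin\theta)^{2k-2}\,d\theta = \pi\binom{2k-2}{k-1}2^{2-2k},
\]
multiplied by powers of $D$ coming from $Q\circ g^{-1}=\sqrt D XY$. The angle integral produces the constant $\pi\binom{2k-2}{k-1}2^{2-2k}$ in the statement. Collecting the powers of $D$ from the substitution together with the normalization in $\mathcal{C}_{2k}$ should give exactly $D^{-k/2}$. We also need to check that the orientation convention via $\sgn(Q)$ matches the sign from $\overline{Q}$ versus $-Q$. Pairing $Q$ with $-Q$ handles this consistently.

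The main obstacle is the middle step: rigorously separating the integrand into the geodesic part and the angular part. This requires the $\theta$-independence (contour shift) of the $\Gamma_Q$-periodic integral and careful tracking of the Jacobian, of $y^{2k-2}$, and of $\overline{Q(z,1)}^{-k}$ in the $(r,\theta)$ coordinates. A sign or power of $D$ is easily lost here, so we would first verify the computation on the model form $\sqrt D\,XY$.
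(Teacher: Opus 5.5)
The paper gives no proof of this lemma; it quotes Kohnen--Zagier and Kohnen. Your route (unfolding, conjugating $Q$ to $\pm\sqrt{D}XY$, polar coordinates, $\theta$-independence of the period integral, Wallis' integral) is the classical argument from those sources.

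The bookkeeping you left as ``should'' does close. Put $\tilde f:=f|_{2k}g^{-1}$. In the model the integrand with respect to $dr\,d\theta$ is $D^{-k/2}\tilde f(re^{i\theta})r^{k-1}e^{ik\theta}\sin^{2k-2}\theta$. The $r$-integral over $[1,\varepsilon^2]$ equals $e^{-ik\theta}\int_i^{i\varepsilon^2}\tilde f(w)w^{k-1}\,dw$, so the phases cancel. Since $D^{\frac12-\frac k2}D^{\frac{k-1}{2}}=1$, this period is $\pm\mathcal{C}_{2k}(f,Q)$.

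Two minor points. First, $\varepsilon$ must be the smallest unit $>1$ of norm $+1$, i.e.\ the square of the fundamental unit when the latter has norm $-1$. Second, if $D=1$ is admitted as a fundamental discriminant, $\Gamma_Q$ is trivial and you integrate over the whole geodesic, using cusp decay at both ends.

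The genuine gap is the sign, i.e.\ the orientation, and your proposed fix does not address it. Pairing $Q$ with $-Q$ only checks consistency. For even $k$ one has $(-Q(z,1))^{-k}=Q(z,1)^{-k}$, and $\mathcal{C}_{2k}(f,-Q)=\mathcal{C}_{2k}(f,Q)$ because the factor $(-1)^{k-1}$ is cancelled by the reversed orientation. Both facts hold under \emph{either} orientation convention, so they cannot tell you which one the unfolding produces.

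To decide it, note that the computation produces the upward path from $i$ to $i\varepsilon^2$ in the model $Q\circ g^{-1}=\sqrt{D}XY$, and along this path $dw/(\sqrt{D}w)>0$. The form $d\tau/Q(\tau,1)$ is invariant under $\tau=g^{-1}w$, so the unfolding always yields the orientation of $S_Q$ on which $d\tau/Q(\tau,1)>0$. As a sanity check, for $f=Q(z,1)^{-k}$ the unfolded integrand is $|Q(z,1)|^{-2k}y^{2k-2}>0$.

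For $a>0$, parametrising $\tau=-\frac{b}{2a}+\frac{\sqrt{D}}{2a}e^{i\phi}$ gives $d\tau/Q(\tau,1)=d\phi/(\sqrt{D}\sin\phi)$. So the unfolding produces the \emph{counterclockwise} orientation, from $\frac{-b+\sqrt{D}}{2a}$ to $\frac{-b-\sqrt{D}}{2a}$. This is opposite to the convention in the paper's definition of $\mathcal{C}_\kappa$, which is clockwise when $\sgn(Q)>0$.

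Consequently, once completed, your argument proves the displayed identity for the counterclockwise-for-$a>0$ orientation. With the paper's convention read literally, it gives the same formula with an extra factor $-1$. You need to carry out this orientation step explicitly rather than defer it, since it is exactly where the sign of the identity is decided.
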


As proven by Löbrich and Schwagenscheidt \cite{lsmeromorphic}*{Theorem 4.2}, the functions $\mathcal{F}_{1-k,D}$ may be represented as traces of cycle integrals of a certain family of bivariate functions that transform like modular forms in both variables. Here, we set $\tau = u + iv$.

\begin{defn}\label{def:GeneralPeterssonPoincare}
    For $z, \tau \in \H$, $k_1 \in \N_{\geq 2}$ and $k_2 \in \Z$, we set
    \begin{align*}
        H_{k_1,k_2}(z,\tau) &:= \sum_{\gamma \in \Gamma} \frac{v^{k_1+k_2}}{(z - \tau)^{k_1-k_2}(z - \overline{\tau})^{k_1+k_2}}\Big|_{2k_1,z}\gamma\\
        &= \sum_{\gamma \in \Gamma} \frac{v^{k_1+k_2}}{(z - \tau)^{k_1-k_2}(z - \overline{\tau})^{k_1+k_2}}\Big|_{-2k_2,\tau}\gamma.
    \end{align*}
     We also define
    \begin{align*}
        \mathscr{P}_{k_1,k_2}(z,\tau) &\coloneqq \sum_{\gamma \in \Gamma} \frac{v^{k_1+k_2}}{\left(z+\tau\right)^{k_1-k_2}\left(z+\overline{\tau}\right)^{k_1+k_2}}\Big\vert_{2k_1,z}\gamma\\
        &= \sum_{\gamma \in \Gamma} \frac{v^{k_1+k_2}}{\left(z+\tau\right)^{k_1-k_2}\left(z+\overline{\tau}\right)^{k_1+k_2}}\Big\vert_{-2k_2,\tau}\gamma.
    \end{align*}
\end{defn}

Löbrich and Schwagenscheidt showed that
\begin{equation}\label{eq:CycleIntegralsCurlyF}
    \mathcal{F}_{1-k,D}(z) = \frac{D^{\frac{k}{2}-\frac{1}{2}}}{2}\binom{2k - 2}{k - 1}\sum_{Q \in \mathcal{Q}_D \slash \Gamma}\mathcal{C}_{2k}(H_{k,k-1}(\tau,z),Q)
\end{equation}
while Mono \cite{mo21}*{item (3) of Theorem 1.1} obtained
\begin{equation*}\label{eq:CycleIntegralsSmallG}
g_{k+1,D}(z) = \frac{2\Gamma(2k + 2)\prod_{j = 0}^{k-1}\left(k + j + 1)\right)}{D^{\frac{k+1}{2}}\Gamma\left(\frac{k+1}{2}\right)^2\prod_{l = 1 - k}^{-1}\left[(1 + l)(-l) + k(k + 1)\right]}
\sum_{Q \in \mathcal{Q}_D \slash \Gamma} \mathcal{C}_{-2k}(H_{k+1,k}(z,\tau),Q)
\end{equation*}
where
\[
g_{k+1,D}(z) \coloneqq \sum_{Q \in \mathcal{Q}_D} \frac{\sgn(Q_z)}{Q(z,1)^{k+1}}, \qquad k > 2, z \in \H,
\]
is another twisted variant of $f_{k,D}$ that is locally holomorphic and modular of weight $2k + 2$. Note the remarkable similarity of both representations as mentioned in Mono's dissertation \cite{MonoThesis}.

Recently, the second author \cite{Stum26}*{Theorem 1.2} established that also
\begin{equation*}\label{eq:CycleIntegralsSmallF}
    f_{k,D}(z) = \frac{(2k - 1)}{D^{\frac{k}{2}}} \binom{2k - 2}{k - 1} \sum_{Q \in \mathcal{Q}_D/\Gamma}\mathcal{C}_{2k}\left(\mathscr{P}_{k,-k}(z, \tau), Q\right)
\end{equation*}
which will be useful later on.

\subsection{Results} A few years ago, Bringmann and Mono \cite{brimo} constructed the function $\mathcal{G}_{-k,D}$ (see Section \ref{sec:ExtendingTheDiagram} for a definition) as a preimage of $g_{k+1,D}$ under both the shadow and the Bol operator, imitating the construction of $\mathcal{F}_{1-k,D}$. Motivated by Bringmann and Kane's recent extensions, we introduce another two functions.

\begin{defn}\label{defn:CurlyJIota}
    Let $k > 2$ and $D > 0$ a discriminant. We define
    \begin{equation*}\label{defeq:Iota}
        \iota_{k+2,D}(z) \coloneqq \frac{R_{2k+2}g_{k+1,D}(z)}{2k + 2}
    \end{equation*}
    as well as
    \begin{equation*}\label{defeq:CurlyJ}
        \mathcal{J}_{-1-k,D}(z) \coloneqq y^{2k+2}\overline{g_{k+1,D}(z)}.
    \end{equation*}
\end{defn}

Our first result is further extending the diagrams from \cite{BringKane}, \cite{MonoRolenStum}, and the end of Chapter I in Mono's PhD thesis \cite{MonoThesis}*{Subchapter I.3}.

\begin{figure}[H]
    \begin{center}% JS: Die neuen Resultate gerne als unterbrochene Linien wegen Inklusion
    \begin{tikzcd}[column sep=0.55in,row sep=0.55in]
        \mathcal{W}_{-k,D} \arrow[dd, swap, bend right=15, "\xi_{-2k}"] & & \mathcal{F}_{1-k,D} \arrow[rrdd, leftrightarrow, "\substack{\mathbb{P}_{2k+2} \textup{ and} \\ \mathbb{P}_{2k} \textup{ resp.}}" description] \arrow[ll, swap, "L_{2-2k}"] \arrow[dd, bend left=15, "\mathbb{D}^{2k-1}"] \arrow[dd, bend right = 15, swap, "\xi_{2-2k}"] &  & \mathcal{G}_{-k,D} \arrow[ll, swap, leftrightarrow, "\textup{twist by } \sgn(Q_z)"] \arrow[rr, dashed, "L_{-2k}"] \arrow[dd, bend left=15, "\mathbb{D}^{2k+1}"] \arrow[dd, bend right = 15, swap, "\xi_{-2k}"] & & \mathcal{J}_{-1 -k,D} \arrow[dd, dashed, bend right=15, swap, "\xi_{-2-2k}"]\\
        &&&&&&\\
        \omega_{k + 1,D} \arrow[uu, bend right=15, swap, "\xi_{2k+2}"] & & f_{k,D} \arrow[ll, swap, "R_{2k}"] \arrow[rr, swap, leftrightarrow, "\textup{twist by } \sgn(Q_z)"] \arrow[lluu, leftrightarrow, "\textup{weight flip}" description] \arrow[rd, swap, "L_{2k}"]& & g_{k+1,D} \arrow[rruu, leftrightarrow, dashed, "\textup{weight flip}" description] \arrow[rr, dashed, "R_{2k+2}"] \arrow[ld, "L_{2k+2}"] & & \iota_{k+2,D} \arrow[uu, bend right=15, dashed, swap, "\xi_{2k+4}"] \\
        &&& 0 &&&
    \end{tikzcd}
    \caption{Relations between the functions considered in this paper. Dashed arrows indicate results established in this paper.}
    \end{center}
\end{figure}

Note the indicated direct link between $\mathcal{F}_{1-k,D}$ and $g_{k+1,D}$. We will show that, using the relations given in the diagram above, generalising to $H_{k_1,k_2}$ and $\mathscr{P}_{k_1,k_2}$ instead of $\mathbb{P}_{2k}$ as an input for cycle integrals allows us to extend the representation as traces of cycle integrals to almost all functions in this family.

\begin{thm}\label{thm:MainResultWeakMaassForms}
    Let $k > 2$ be even and $D > 0$ a discriminant. We have
	\begin{align*}
	\mathcal{W}_{-k,D}(z) &= -\frac{ky^{2k}}{2D^{\frac{k}{2}}} \binom{2k}{k}
	\sum_{Q \in \mathcal{Q}_D/\Gamma} \mathcal{C}_{-2k}\left(v^{2k}\overline{\mathscr{P}_{k,-k}\left( z, \tau \right)},Q\right)
	\intertext{as well as}
	\omega_{k+1,D}(z) &= \frac{k}{2D^{\frac{k}{2}}} \binom{2k}{k} \sum_{Q \in \mathcal{Q}_D/\Gamma}\mathcal{C}_{2k}\left(\mathscr{P}_{k,-(k+1)}(\tau,z), Q\right)
	\intertext{for $z \in \H$. For $z \in \H \setminus E_D$, we also have}
	\mathcal{W}_{-k,D}(z) &= \frac{k}{4D^{\frac{k}{2}}}\binom{2k}{k}\sum_{Q \in \mathcal{Q}_D/\Gamma}\mathcal{C}_{2k}\left(H_{k,k}(\tau,z),Q\right).
	\end{align*}
\end{thm}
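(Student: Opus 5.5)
The plan is to obtain all three identities from the cycle integral representations already available, namely Stumpenhusen's formula for $f_{k,D}$ in terms of $\mathscr{P}_{k,-k}$ and the Löbrich--Schwagenscheidt formula \eqref{eq:CycleIntegralsCurlyF} for $\mathcal{F}_{1-k,D}$. We then apply the operators in the diagram, moving each operator inside the finite trace and the compact cycle integral. This is legitimate because the series defining $\mathscr{P}_{k_1,k_2}$ and $H_{k_1,k_2}$ converge absolutely and locally uniformly for $k_1$ large ($k>2$). Concretely, the operators act in the variable $z$ (the non-integrated variable), so they pass through $\int_{\Gamma_Q\backslash S_Q}$ and through the sum over $\Gamma$. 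The key local computations are on the seed functions $\frac{v^{k_1+k_2}}{(z+\tau)^{k_1-k_2}(z+\overline\tau)^{k_1+k_2}}$ and the analogous $H$-seeds. The operators commute with the slash action in $z$ once the weight indices match: $R_\kappa(f|_\kappa\gamma)=(R_\kappa f)|_{\kappa+2}\gamma$, and similarly for $L_\kappa$.

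\emph{First identity.} Since $\mathcal{W}_{-k,D}=y^{2k}\overline{f_{k,D}}$, it suffices to conjugate Stumpenhusen's formula for $f_{k,D}$ and multiply by $y^{2k}$. The remaining work is bookkeeping. The complex conjugate of $\mathcal{C}_{2k}(\cdot,Q)$ must be rewritten as a weight $-2k$ cycle integral. Here we use that on $S_Q$ one has $Q(\tau,1)\,d\tau$ related to $v^{2}\overline{Q(\tau,1)}^{-1}\cdots$; more precisely $Q(\tau,1)/\sqrt D$ has modulus $v$ up to the factor $\overline{Q(\tau,1)}$, so that $\overline{Q(\tau,1)^{k-1}d\tau}=\pm v^{2k-2}\,\overline{\cdots}$ converts into $Q(\tau,1)^{-k-1}d\tau$ with a factor $D^{\ast}v^{2k}$. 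Since $Q\mapsto -Q$ permutes $\mathcal Q_D/\Gamma$ and reverses orientation, the sign is fixed. This should produce $v^{2k}\overline{\mathscr P_{k,-k}}$ inside $\mathcal{C}_{-2k}$. The constant changes from $(2k-1)\binom{2k-2}{k-1}$ to $\frac k2\binom{2k}{k}$ via $\binom{2k}{k}=\frac{2(2k-1)}{k}\binom{2k-2}{k-1}$, with the sign coming from the orientation.

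\emph{Second identity.} Apply Theorem \ref{thm:BringKaneOmegaFRelation}, which gives $\omega_{k+1,D}=R_{2k}f_{k,D}/2k$, to Stumpenhusen's formula. We need $R_{2k,z}\mathscr P_{k,-k}(z,\tau)$, and after the switch of variables this should be a multiple of $\mathscr P_{k,-(k+1)}(\tau,z)$. On the seed, $(z+\overline\tau)^{0}$ and $v^0$ drop out, and $R_{2k}(z+\tau)^{-2k}$ produces $-4k i(z+\tau)^{-2k-1}+\frac{2k}{y}(z+\tau)^{-2k}$. Combining these gives $2k\cdot\frac{(\overline z+\tau)}{y(z+\tau)^{2k+1}}$ up to a constant. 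Read with the roles of $z$ and $\tau$ exchanged, this is exactly the $\mathscr P_{k,-(k+1)}(\tau,z)$ seed $\frac{y^{-1}}{(\tau+z)^{2k+1}(\tau+\overline z)^{-1}}$. One must also check that the two slash descriptions (weight $2k$ in the integration variable, weight $2k+2$ in $z$) agree. Matching constants then yields $\frac{k}{2D^{k/2}}\binom{2k}{k}$.

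\emph{Third identity.} Apply Theorem \ref{thm:RelationWf}(1), $\mathcal W_{-k,D}=D^{\frac12-k}L_{2-2k}\mathcal F_{1-k,D}$, to \eqref{eq:CycleIntegralsCurlyF}, valid off $E_D$. We compute $L_{2-2k,z}$ of the $H_{k,k-1}(\tau,z)$ seed, which in $z$ has weight $2-2k$: $\frac{y^{2k-1}}{(\tau-z)(\tau-\overline z)^{2k-1}}$. Applying $-2iy^2\partial_{\overline z}$ gives a term from $y^{2k-1}$ and one from $(\tau-\overline z)^{1-2k}$. These combine to a constant times $\frac{y^{2k}}{(\tau-\overline z)^{2k}}$, since the $(\tau-z)$ factor cancels via $\tau-z-(\tau-\overline z)=-2iy$. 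This is the $H_{k,k}(\tau,z)$ seed, with constant $-(2k-1)\cdot$(unit). The constant then becomes $\frac{D^{k/2-1/2}}2\binom{2k-2}{k-1}(2k-1)D^{1/2-k}$, which after the same binomial identity gives $\frac{k}{4D^{k/2}}\binom{2k}{k}$.

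I expect the main obstacle to be the second identity. The difficulty is the careful tracking of the variable swap and the verification that the differentiated series genuinely equals $\mathscr{P}_{k,-(k+1)}(\tau,z)$ as a $\Gamma$-sum with the correct slash action. Convergence of $\mathscr{P}_{k,-(k+1)}$ at the edge exponent also needs checking, together with the sign and orientation conventions in the conjugated cycle integral for the first identity.
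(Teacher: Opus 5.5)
Your arguments for the second and third identities are sound and follow the paper's route: apply $R_{2k,z}$, resp.\ $L_{2-2k,z}$, inside the cycle integrals of the known representations. The only difference is that you verify the raising/lowering identities directly on the seeds. This even removes the need for the symmetry $\mathscr{P}_{k,-k}(z,\tau)=\mathscr{P}_{k,-k}(\tau,z)$ used in the paper, because the weight matching is built into the two expressions in Definition \ref{def:GeneralPeterssonPoincare}. (In the third identity the factor is exactly $+(2k-1)$, which is what you then use.)

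The gap is the sign in the first identity. Conjugating the $m=0$ formula is a legitimate and more elementary alternative to the paper's argument, but it produces no minus sign, and appealing to orientation and to $Q\mapsto -Q$ does not change that. On $S_Q$ one has $|Q(\tau,1)|^2=Dv^2$ and $d\tau/Q(\tau,1)\in\mathbb{R}$. Hence, for each individual $Q$ and along the same oriented path,
\[
\overline{\mathcal{C}_{2k}(g,Q)}=D^{\frac12-\frac k2}\int_{\Gamma_Q\backslash S_Q}\overline{g(\tau)}\,\overline{Q(\tau,1)}^{\,k}\,\frac{d\tau}{Q(\tau,1)}=\mathcal{C}_{-2k}\left(v^{2k}\overline{g},Q\right).
\]
Moreover, for even $k$ both $\mathcal{C}_{2k}(\cdot,Q)$ and $\mathcal{C}_{-2k}(\cdot,Q)$ are invariant under $Q\mapsto -Q$, since the orientation reversal cancels $(-1)^{k-1}$, resp.\ $(-1)^{k+1}$. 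Carried out correctly, your route therefore gives the first identity with $+\frac{ky^{2k}}{2D^{k/2}}\binom{2k}{k}$ instead of $-\frac{ky^{2k}}{2D^{k/2}}\binom{2k}{k}$.

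The paper instead proceeds as follows:
\begin{itemize}
\item it uses $m=2$ in Theorem \ref{thm:Stum26Result} and writes $(2k+1)\mathscr{P}_{k,-(k+2)}=R_{2k+2,\tau}\mathscr{P}_{k,-(k+1)}$;
\item it converts $\overline{\mathcal{C}_{2k+4}(R_{2k+2,\tau}\mathscr{P}_{k,-(k+1)},Q)}$ into $\mathcal{C}_{-2k}(\xi_{2k+2,\tau}\mathscr{P}_{k,-(k+1)},Q)$ by Theorem \ref{thm:AlfesSchwagen};
\item it evaluates $\xi_{2k+2,\tau}\mathscr{P}_{k,-(k+1)}=-v^{2k}\overline{L_{2k+2,\tau}\mathscr{P}_{k,-(k+1)}}=v^{2k}\overline{\mathscr{P}_{k,-k}}$.
\end{itemize}
The minus sign of the statement enters only in this last step, via the introduction's convention $\xi_\kappa=-y^{\kappa-2}\overline{L_\kappa}$. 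That convention contradicts Subsection \ref{subsec:DifferentialOperators} and Lemma \ref{lem:BringKaneLemmata}(3), where $\xi_\kappa f=y^{\kappa-2}\overline{L_\kappa f}$. Only the latter is compatible with the first equality of Theorem \ref{thm:AlfesSchwagen} (which is precisely your conjugation identity) and with Theorem \ref{thm:RelationWf}(2). With it, $\xi_{2k+2,\tau}\mathscr{P}_{k,-(k+1)}=-v^{2k}\overline{\mathscr{P}_{k,-k}}$, and the paper's chain also ends with a plus sign.

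The inputs are not to blame: the $m=0$ and $m=2$ cases of Theorem \ref{thm:Stum26Result} agree via \eqref{eq:AlfesSchwagenRaising} with $\lambda=0$. So you cannot obtain the stated sign from orientation. Your method proves the first identity with the opposite sign, and you should state it that way and flag the discrepancy.
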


In a similar manner, we obtain a representation for the newly introduced $\mathcal{J}_{-1-k,D}$.

\begin{thm}\label{thm:MainResultLocallyWeakMaassForms}
    Let $k > 2$ be even and $D > 0$ a discriminant. We have
	\begin{align*}
	\mathcal{J}_{-1-k,D}(z) &=\frac{-y^{2k+2}2k\Gamma(2k+2)\prod_{j=0}^{k-1}(k+j+1)}{D^{\frac{k+1}{2}}\Gamma\left(\frac{k+1}{2}\right)^2 \prod_{l=1-k}^{-1}((1+l)(-l)+k(k+1))}\\
	&\phantom{====================} 
	\times \sum_{Q \in \mathcal{Q}_D / \Gamma} \mathcal{C}_{2k}\left(v^{-2k}\overline{H_{k+1,k}(z, \tau)},Q\right)
	\end{align*}
	for $z \in \H \setminus E_D$.
\end{thm}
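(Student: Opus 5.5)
The representation should follow from Mono's formula for $g_{k+1,D}$ by complex conjugation and multiplication by $y^{2k+2}$. Write $c_{k}$ for the constant in Mono's result, so that
\[
g_{k+1,D}(z) = c_k \sum_{Q \in \mathcal{Q}_D/\Gamma} \mathcal{C}_{-2k}\left(H_{k+1,k}(z,\tau),Q\right), \qquad c_k = \frac{2\Gamma(2k+2)\prod_{j=0}^{k-1}(k+j+1)}{D^{\frac{k+1}{2}}\Gamma\left(\frac{k+1}{2}\right)^2\prod_{l=1-k}^{-1}\left[(1+l)(-l)+k(k+1)\right]}.
\]
This holds for $z \in \H\setminus E_D$, where $g_{k+1,D}$ is defined and locally holomorphic. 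Since $c_k$ is real, Definition \ref{defn:CurlyJIota} gives
\[
\mathcal{J}_{-1-k,D}(z) = y^{2k+2}\,c_k\sum_{Q}\overline{\mathcal{C}_{-2k}\left(H_{k+1,k}(z,\tau),Q\right)}.
\]
It therefore suffices to show that, for each $Q$,
\[
\overline{\mathcal{C}_{-2k}\left(H_{k+1,k}(z,\tau),Q\right)} = -k\,\mathcal{C}_{2k}\left(v^{-2k}\overline{H_{k+1,k}(z,\tau)},Q\right).
\]
The factor $2k$ in the claimed constant, compared with the factor $2$ in $c_k$, indicates that this is the required identity, including the sign.

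To prove it I would parametrise the Heegner geodesic $S_Q$. The key observation is that on $S_Q$ the differential $Q(\tau,1)^{-1}\,\mathrm{d}\tau$ is, up to the orientation convention, a real multiple of the hyperbolic arclength: $\frac{\mathrm{d}\tau}{Q(\tau,1)} = \pm\frac{i\,\mathrm{d}s}{\sqrt{D}}$ with real $\mathrm{d}s$. Moreover $Q(\tau,1) = \pm i\sqrt{D}\,v$ up to a unimodular factor $e^{i\theta}$ along the semicircle. More precisely, if $a$ is the leading coefficient and $\tau = \frac{-b}{2a} + \frac{\sqrt{D}}{2|a|}e^{i\theta}$, then $Q(\tau,1) = \frac{D}{4a}(e^{2i\theta}-1) = \frac{D}{2a}\,i e^{i\theta}\sin\theta$. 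Since $v = \frac{\sqrt D}{2|a|}\sin\theta$, this gives $Q(\tau,1) = \pm i\sqrt{D}\,v\,e^{i\theta}$.

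Using this, I would compute
\[
\overline{Q(\tau,1)^{-k-1}\,\mathrm{d}\tau} = Q(\tau,1)^{k-1}\,\mathrm{d}\tau \cdot \frac{\overline{\mathrm{d}\tau}/\overline{Q}}{\mathrm{d}\tau/Q}\cdot\frac{1}{|Q|^{2k}}.
\]
Here $\overline{\mathrm{d}\tau/Q} = -\mathrm{d}\tau/Q$ and $|Q|^{2k} = D^{k}v^{2k}$. Comparing the powers of $D$ in the normalisations $D^{\frac12+\frac k2}$ for $\mathcal{C}_{-2k}$ and $D^{\frac12-\frac k2}$ for $\mathcal{C}_{2k}$, the factor $D^{k}$ cancels exactly. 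The orientation is the same in both integrals, since it depends only on $\sgn(Q)$. This already gives the identity with factor $-1$ rather than $-k$.

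So the main obstacle is the remaining factor of $k$ (or a mismatch with Mono's normalisation). I would first check carefully whether Mono's constant, as quoted, includes a factor absorbed differently, for example a $\frac{1}{k}$ coming from the $\Gamma$-products. Failing that, I would compare with the analogous derivation of the first formula in Theorem \ref{thm:MainResultWeakMaassForms} from the $f_{k,D}$ representation. There the constant changes from $(2k-1)\binom{2k-2}{k-1}$ to $\frac{k}{2}\binom{2k}{k} = (2k-1)\binom{2k-2}{k-1}$ with a sign, so the conjugation step produces only the factor $-1$. By analogy, I expect the paper's constant to come from the same computation with Mono's constant rewritten. I would verify the bookkeeping in that form, and otherwise accept whatever constant the conjugation computation produces.

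Finally, I would justify interchanging conjugation with the sum over $\Gamma$ in $H_{k+1,k}$ and with the integral. This is routine by absolute convergence for $k>2$ and compactness of $\Gamma_Q\backslash S_Q$, with $z \notin E_D$ ensuring that no pole lies on the geodesics.
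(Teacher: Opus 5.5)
Your route (conjugate Mono's $m=0$ formula and convert the conjugated weight $-2k$ cycle integral pointwise) is shorter than the paper's, but its key step is wrong: the claim $\overline{\mathrm{d}\tau/Q} = -\mathrm{d}\tau/Q$ is false. With your own parametrisation, $\mathrm{d}\tau = i\frac{\sqrt{D}}{2|a|}e^{i\theta}\,\mathrm{d}\theta$ and $Q(\tau,1) = \frac{D}{2a}ie^{i\theta}\sin\theta$, so
\[
\frac{\mathrm{d}\tau}{Q(\tau,1)} = \frac{\sgn(a)\,\mathrm{d}\theta}{\sqrt{D}\,\sin\theta}
\]
is \emph{real}. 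It is $\pm D^{-1/2}$ times hyperbolic arclength, with no factor $i$ (likewise for $a=0$). Using $|Q(\tau,1)| = \sqrt{D}\,v$ and the unchanged orientation, you get $\overline{\mathcal{C}_{-2k}(F,Q)} = \mathcal{C}_{2k}(v^{-2k}\overline{F},Q)$ with factor $+1$. This is just the first equality of Theorem \ref{thm:AlfesSchwagen}, rewritten via $L_{2-2k}F = v^{2k}\overline{\xi_{2-2k}F}$ (Lemma \ref{lem:BringKaneLemmata}(3)). So, done correctly, your argument proves
\[
\mathcal{J}_{-1-k,D}(z) = c_k\,y^{2k+2}\sum_{Q \in \mathcal{Q}_D/\Gamma}\mathcal{C}_{2k}\left(v^{-2k}\overline{H_{k+1,k}(z,\tau)},Q\right),
\]
which is the displayed right-hand side divided by $-k$. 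As written, the proposal does not reach the stated constant and leaves this open.

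The factor you are hunting for does not exist, though. The paper's route is different:
\begin{enumerate}
\item it conjugates the $m=2$ case of Theorem \ref{thm:Stum26Result};
\item it writes $H_{k+1,k-2} = \frac{1}{2}R_{2-2k,\tau}H_{k+1,k-1}$;
\item it applies Theorem \ref{thm:AlfesSchwagen} to reach $\mathcal{C}_{2k}(\xi_{2-2k,\tau}H_{k+1,k-1},Q)$;
\item it lowers with $L_{2-2k,\tau}H_{k+1,k-1} = 2kH_{k+1,k}$, ending at the constant $-kK_2$.
\end{enumerate}
But by the definition of $K_m$, $K_2 = K_0/k = c_k/k$, so $-kK_2 = -c_k$. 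The statement instead prints $-kc_k$, i.e.\ $K_2$ read as $K_0$.

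Moreover, the minus sign in that chain comes from using $\xi_\kappa = -v^{\kappa-2}\overline{L_\kappa}$. This contradicts Lemma \ref{lem:BringKaneLemmata}(3) and the formula for $\xi$ in Subsection \ref{subsec:DifferentialOperators}, which is the convention under which Theorem \ref{thm:AlfesSchwagen} holds. With that convention the paper's chain gives $kK_2 = c_k$. In fact the detour is nothing but $\mathcal{C}_{4-2k}(H_{k+1,k-2},Q) = k\,\mathcal{C}_{-2k}(H_{k+1,k},Q)$ (first equality of Theorem \ref{thm:AlfesSchwagen} with $F = H_{k+1,k-1}$), followed by your conjugation. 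The minus sign in the first formula of Theorem \ref{thm:MainResultWeakMaassForms}, which you used as a sanity check, has the same origin.

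So fix the sign and state the identity with constant $c_k$. Do not try to manufacture a factor $-k$ that neither route produces.
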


Lastly, we recover another representation for $f_{k,D}$ and $\omega_{k+1,D}$ via Bol's identity.

\begin{thm}\label{thm:NewRepForFkD}
	Let $k > 2$ be even and $D > 0$ a discriminant. We have
	\begin{align*}
	f_{k,D}(z) &= \frac{k}{4D^{\frac{k}{2}}}\binom{2k}{k}\sum_{Q \in \mathcal{Q}_D \slash \Gamma}\mathcal{C}_{2k}(H_{k,-k}(\tau,z),Q)
	\intertext{and}
	\omega_{k+1,D}(z) &= \frac{k}{4D^{\frac{k}{2}}}\binom{2k}{k}\sum_{Q \in \mathcal{Q}_D \slash \Gamma}\mathcal{C}_{2k}(H_{k,-(k+1)}(\tau,z),Q)
	\end{align*}
	for $z \in \H \setminus E_D$.
\end{thm}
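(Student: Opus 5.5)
Since the theorem advertises that the new representations come ``via Bol's identity'', I would start from the Löbrich--Schwagenscheidt representation \eqref{eq:CycleIntegralsCurlyF} of $\mathcal{F}_{1-k,D}$ and apply $\mathbb{D}^{2k-1}$. By Bol's identity, $\mathbb{D}^{2k-1} = (-4\pi)^{1-2k}R_{2-2k}^{2k-1}$. The Bol operator sends $\mathcal{F}_{1-k,D}$ to a constant multiple of $f_{k,D}$ on $\H\setminus E_D$, by \cite{bkk}. The main task is therefore to commute the iterated raising operator in $z$ with the cycle integral and the trace over $\mathcal{Q}_D/\Gamma$. After that, I need to compute $R^{2k-1}_{2-2k,z}$ applied to the kernel $H_{k,k-1}(\tau,z)$. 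In the $z$-variable this kernel transforms with weight $-2k_2=2-2k$.

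For the kernel computation I would work termwise in the Poincaré-type series and use that the raising operator commutes with the slash action. It therefore suffices to compute raising operators on the seed
\[
\varphi_{k_2}(\tau,z)=\frac{y^{k+k_2}}{(\tau-z)^{k-k_2}(\tau-\overline z)^{k+k_2}},
\]
viewed as a function of $z$ of weight $-2k_2$. A direct differentiation should give
\[
R_{-2k_2,z}\,\varphi_{k_2}=c(k,k_2)\,\varphi_{k_2-1}
\]
for an explicit constant; I expect $c(k,k_2)=\pm i(k-k_2)$ or $\pm i(k+k_2)$ up to a factor. The reason is that $2i\partial_z$ hits $(\tau-z)^{-(k-k_2)}$ and $y^{k+k_2}$. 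The $y$-terms combine with $\kappa/y$, and the $(\tau-\overline z)$ factor absorbs one power.

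Iterating this from $k_2=k-1$ down to $k_2=-k$ uses $2k-1$ steps and produces $H_{k,-k}(\tau,z)$ times a product of constants. Multiplying these constants by the prefactor $\frac{D^{(k-1)/2}}{2}\binom{2k-2}{k-1}$, by $(-4\pi)^{1-2k}$, and by the constant relating $\mathbb{D}^{2k-1}\mathcal{F}_{1-k,D}$ to $f_{k,D}$ should yield $\frac{k}{4D^{k/2}}\binom{2k}{k}$. The change in the power of $D$ must come from the Bol constant $D^{1/2-k}$-type normalisation in \cite{bkk}, exactly as in Theorem~\ref{thm:RelationWf}.

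For the second identity I apply Theorem~\ref{thm:BringKaneOmegaFRelation}, namely $\omega_{k+1,D}=\frac{1}{2k}R_{2k}f_{k,D}$, to the first formula. One more application of the same recursion, now with $k_2=-k$ and weight $2k$, turns $H_{k,-k}$ into $H_{k,-(k+1)}$. The constant should be exactly $2k$ up to sign, which leaves the prefactor unchanged, consistent with the statement.

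The main obstacle is analytic rather than algebraic: interchanging $R_z$ with the cycle integral over $\Gamma_Q\backslash S_Q$. The kernels $H_{k,k_2}(\tau,z)$ are singular when $\tau=\gamma z$, and this happens on the geodesic precisely when $z\in E_D$. For $z\notin E_D$ I would argue as follows. On a compact neighbourhood of $z$ avoiding $E_D$, the series and its $z$-derivatives converge absolutely and locally uniformly in $\tau$ on a compact fundamental arc of $\Gamma_Q\backslash S_Q$. Absolute convergence holds since $k_1=k\ge 3$ in the exponent sum $k-k_2+k+k_2=2k$. This justifies differentiating under the integral, and the trace over $\mathcal{Q}_D/\Gamma$ is finite. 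A secondary nuisance is tracking sign and orientation conventions (the factor $(\tau-z)$ versus $(z-\tau)$), which I would settle by comparing with the known case of \eqref{eq:CycleIntegralsCurlyF}.
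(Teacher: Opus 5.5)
This is essentially the paper's proof. It combines Lemma~\ref{lem:bkk}, Bol's identity \eqref{eq:BolsIdentityRauschStum}, the L\"obrich--Schwagenscheidt representation \eqref{eq:CycleIntegralsCurlyF} and iterated raising in $z$ under the cycle integral, then Theorem~\ref{thm:BringKaneOmegaFRelation} for $\omega_{k+1,D}$; your justification of differentiating under the integral for $z\notin E_D$ is extra care the paper omits. The one thing to pin down is the step constant: direct differentiation, which is just Lemma~\ref{lem:PeterssonPoincareRaisingLowering} read in the second slot, gives exactly $R_{-2k_2,z}H_{k,k_2}(\tau,z)=(k-k_2)H_{k,k_2-1}(\tau,z)$, real with no factor $i$ and no sign ambiguity (a factor $i^{2k-1}$ would make your constant imaginary). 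So the $2k-1$ steps contribute $(2k-1)!$, which together with $(4\pi)^{2k-1}(-4\pi)^{1-2k}=-1$ and $(2k-1)\binom{2k-2}{k-1}=\frac{k}{2}\binom{2k}{k}$ gives the stated constant, and the final step to $H_{k,-(k+1)}$ contributes exactly $+2k$.
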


This paper is structured in the following way. In Section 2 we give some preliminaries. Section 3 deals with the extension of the diagram in figure 1. In the fourth and final section, we derive the cycle integral representations.
\section*{Acknowledgements}
The authors thank Kathrin Bringmann and Andreas Mono for insightful comments on earlier versions of this manuscript. 
The first author is funded by the European
Research Council (ERC) under the European Union's Horizon 2020 research and innovation programme (grant agreement No. 101001179).
\section{Preliminaries}

If not clarified by the context, we denote the variable with respect to which an operator is considered to be by an index.

\subsection{Weak Maass forms}\label{subsec:LocalMaass} Let $\kappa \in \Z$, $f$ a function defined on a dense subset of $\H$, and $\gamma = \left(\begin{smallmatrix}
    a_1 & a_2 \\ a_3 & a_4
\end{smallmatrix} \right)\in \Gamma$, then the \emph{weight $\kappa$ slash operator} of $\gamma$ on $f$ is defined via
\[(f\big\vert_\kappa\gamma)(z) := j(\gamma,z)^{-\kappa}f\left(\frac{a_1z + a_2}{a_3z + a_4}\right),\]
where $j(\gamma,z) := (a_3z + a_4)$ is the usual modular cocycle.
The raising and lowering operator both commute with the weight $\kappa$ slash operator in the following sense:
\begin{align*}
    R_\kappa f \big|_{\kappa+2} \gamma = R_{\kappa } \left(f\big|_{\kappa}\gamma\right) 
    \quad \text{ and }\quad
        L_\kappa f \big|_{\kappa-2} \gamma = L_{\kappa} \left(f\big|_{\kappa}\gamma\right).
\end{align*}
Let $N \in \N$. The function $f$ is called a \emph{weight $\kappa$ weak Maass form for}
\[\Gamma_0(N) := \left\{\begin{pmatrix}
        a_1 & a_2 \\ a_3 & a_4
    \end{pmatrix}
\in \Gamma : a_4 \equiv 0 \pmod*{N} \right\}\]
\emph{of manageable growth with eigenvalue $\lambda$ } if
\begin{enumerate}
    \item for $z \in \H$, we have
    \[(f\big\vert_\kappa\gamma)(z) = f(z)\]
    for all $\gamma \in \Gamma_0(N)$,
    \item with $\Delta_\kappa \coloneqq -y^2 \left(\frac{\partial^2}{\partial x^2} + \frac{\partial^2}{\partial y^2}\right) + i\kappa y \left(\frac{\partial}{\partial x} + i\frac{\partial}{\partial y}\right)$ being the weight $\kappa$ hyperbolic Laplacian operator, we have $\Delta_\kappa f = \lambda f$ on $\H$,
    \item it holds that 
    \[f(z) = O(e^{\varepsilon y})\]
    for some $\varepsilon > 0$ as $y \to \infty$, and analogous condition hold at every other cusp of $\Gamma_0(N)$.
\end{enumerate}
If $\lambda = 0$, $f$ is called \emph{harmonic}, and the vector space of weight $\kappa$ harmonic Maass forms  of manageable growth is denoted by $H_\kappa^{!}(N)$.

The function $f$ is called a \emph{weight $\kappa$ local weak Maass form of manageable growth for $\Gamma_0(N)$ with eigenvalue $\lambda$} if there exists an exceptional set $E \subset \H$ of measure 0 such that
\begin{enumerate}
    \item for $z \in \H \setminus E$, we have
    \[(f\big\vert_\kappa\gamma)(z) = f(z)\]
    for all $\gamma \in \Gamma_0(N)$,
    \item we have $\Delta_\kappa f = \lambda f$ on $\H \setminus E$,
    \item for $z \in E$, we have
    \[\lim_{\varepsilon \searrow 0} \frac{1}{2}\left(f(z + i\varepsilon) + f(z - i\varepsilon)\right) = f(z),\]
    \item the function $f$ has at most exponential growth towards $i\infty$.
\end{enumerate}
If a (local) weak Maass form vanishes towards $i \infty$, it is called \emph{cuspidal} or a \emph{cusp form}.

An overview can be found in \cite{bfkr} for example.

\subsection{Differential operators and Eichler integrals}\label{subsec:DifferentialOperators} Let $\kappa, N \in \N$. The Bol operator
\[\mathbb{D}^{\kappa-1}: H_{2-\kappa}^!(\Gamma_0(N)) \to M_\kappa^!(\Gamma_0(N)), \qquad f \mapsto \mathbb{D}^{\kappa-1}(f)\]
preserves modularity. Likewise, the shadow operator
\[\xi_{2-\kappa}: H_{2-\kappa}^!(\Gamma_0(N)) \twoheadrightarrow M_\kappa^!(\Gamma_0(N)), \qquad f \mapsto 2iy^{2-\kappa}\overline{\frac{\partial}{\partial \overline{z}}f}\]
also preserves modularity and we call $\xi_{2-\kappa}(f)$ the \emph{shadow} of $f$. Each of these two operators is accompanied by the so-called \emph{holomorphic} or \emph{non-holomorphic Eichler integral}, respectively, which are defined, for $f \in S_\kappa(\Gamma_0(N)),$ as
\begin{align*}
    \mathcal{E}_f(z) &\coloneqq -\frac{(2\pi i)^{\kappa-1}}{(\kappa-2)!}\int_z^{i\infty}f(\omega)(z - \omega)^{\kappa-2} \, \textup{d}\omega \quad \textup{and}\\
    f^*(z) &\coloneqq (2i)^{1-\kappa}\int_{-\overline{z}}^{i\infty}\overline{f(-\overline{\omega})}(\omega + z)^{\kappa-2} \, \textup{d}\omega.
\end{align*}
They satisfy the relations
\[\mathbb{D}^{\kappa-1}(\mathcal{E}_f) = f \quad \textup{and} \quad \xi_{2-\kappa}(f^*) = f\]
as well as
\[\mathbb{D}^{\kappa-1}(f^*) = 0 \quad \textup{and} \quad \xi_{2-\kappa}(\mathcal{E}_f) = 0.\]

Each of these four operators may be extended to local weak Maass forms in a natural way. 

\subsection{Binary integral quadratic forms and cycle integrals}\label{subsec:BinaryAndCycleIntegrals} A binary integral quadratic form $Q(X,Y) = aX^2 + bXY + cY^2$ will be denoted by $Q = [a,b,c]$ throughout. There exists an intensively studied action of $\Gamma$ on $\mathcal{Q}_D$ which is given by
\[\left(Q \circ \begin{pmatrix}
    a_1 & a_2 \\ a_3 & a_4
\end{pmatrix}\right)(X,Y) = Q(a_1X + a_2Y, a_3X + a_4Y)\]
and is compatible with the action of $\Gamma$ on $\H$ via
\[(Q \circ \gamma)(z,1) = j(\gamma,z)^2Q(\gamma z,1).\]

Due to this property, the Maass and differential operators from above interact nicely with the cycle integral as was shown by Alfes and Schwagenscheidt.

\begin{thm}[\protect{\cite{AlfesSchwagen}*{Theorem 1.1}}] \label{thm:AlfesSchwagen}
    Let $F: \H \to \C$ be a smooth function which transforms like a modular form of weight $2 - 2\kappa, \kappa \in \Z,$ for $\Gamma$. Then the identity
    \[\mathcal{C}_{-2\kappa}(L_{2-2\kappa}F,Q) = \mathcal{C}_{4-2\kappa}(R_{2-2\kappa}F,Q) = \overline{\mathcal{C}_{2\kappa}(\xi_{2-2\kappa}F,Q)}\]
    holds.

    Furthermore, if $F$ is a weak Maass form of weight $2 - 2\kappa$ with eigenvalue $\lambda$, we have
    \begin{align}\label{eq:AlfesSchwagenRaising}
        \mathcal{C}_{2-2\ell}(R_{2-2\kappa}^{\kappa-\ell}F,Q) &= \left((\kappa + \ell)(\kappa - \ell - 1) - \lambda\right) \mathcal{C}_{-2-2\ell}(R_{2-2\kappa}^{\kappa-\ell-2}F,Q), &\ell \leq \kappa - 2;\\
        \label{eq:AlfesSchwagenLowering}
        \mathcal{C}_{2\ell-2}(L_{2-2\kappa}^{2-\kappa-\ell}F,Q) &= \left((\kappa + \ell)(\kappa - \ell - 1) - \lambda\right) \mathcal{C}_{2+2\ell}(L_{2-2\kappa}^{-\kappa-\ell}F,Q), &\ell \leq -\kappa.
    \end{align}
    Here, we adjust the weight while iterating either operator.
\end{thm}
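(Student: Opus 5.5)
The plan is to reduce everything to one identity on the geodesic $S_Q$: a total derivative of a $\Gamma_Q$-invariant function integrates to zero over the closed cycle $\Gamma_Q\backslash S_Q$. Concretely, for $F$ of weight $2-2\kappa$, set $h(\tau) := F(\tau)Q(\tau,1)^{\kappa-1}$. Since $F$ has weight $2-2\kappa$ and $Q(\tau,1)^{\kappa-1}$ transforms with weight $2\kappa-2$, with $Q\circ\gamma = Q$ for $\gamma\in\Gamma_Q$, the function $h$ is $\Gamma_Q$-invariant. Hence $\int_{\Gamma_Q\backslash S_Q} \mathrm{d}h = 0$. Writing $\mathrm{d}h = \partial_\tau h\,\mathrm{d}\tau + \partial_{\overline\tau}h\,\mathrm{d}\overline\tau$ should produce the first chain of equalities, once both pieces are matched with cycle-integral integrands.

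To carry this out I will use the following geometric facts on $S_Q$, where $Q_\tau = 0$, i.e. $a|\tau|^2+bu+c=0$. First, $Q(\tau,1) = iv\,Q'(\tau,1) = iv(2a\tau+b)$. Second, $|Q(\tau,1)|^2 = Dv^2$. Third, parametrising the circle as $\tau + \frac{b}{2a} = re^{i\theta}$ gives $\mathrm{d}\tau = \frac{i}{2a}(2a\tau+b)\,\mathrm{d}\theta$. Consequently $Q(\tau,1)^{-1}\mathrm{d}\tau = \frac{\mathrm{d}\theta}{2av}$ is real, so $\mathrm{d}\overline\tau$ can be rewritten via $\overline{Q(\tau,1)}^{-1}\mathrm{d}\overline\tau = Q(\tau,1)^{-1}\mathrm{d}\tau$. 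The case $a=0$, where $S_Q$ is a vertical line, is handled analogously.

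With these facts, the $\partial_\tau$-part of $\mathrm{d}h$ becomes $Q^{\kappa-1}\big(\partial_\tau F + (\kappa-1)F\,Q'/Q\big)\mathrm{d}\tau$. Replacing $Q'/Q$ by $1/(iv)$ turns the bracket into a multiple of a raising-type operator applied to $F$. I then trade powers using $Q^{\kappa-1} = Q^{1-\kappa}\cdot Q^{2\kappa-2}$ together with $|Q|^2 = Dv^2$ and the realness of $Q^{-1}\mathrm{d}\tau$, which converts this into $\mathcal{C}_{4-2\kappa}(R_{2-2\kappa}F,Q)$ up to the normalising factor $D^{\frac12-\frac\kappa4}$. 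The $\partial_{\overline\tau}$-part, via $L = -2iv^2\partial_{\overline\tau}$ and the conjugate relation for $\mathrm{d}\overline\tau$, becomes $\mathcal{C}_{-2\kappa}(L_{2-2\kappa}F,Q)$. The vanishing of $\int \mathrm{d}h$ gives equality, possibly with a sign that is absorbed by the orientation convention. The third expression then follows directly, since $\xi_{2-2\kappa}F = -v^{-2\kappa}\overline{L_{2-2\kappa}F}$: conjugating the integrand and using $\overline{Q^{\kappa-1}\mathrm{d}\tau}$ together with the realness of $Q^{-1}\mathrm{d}\tau$ yields $\overline{\mathcal{C}_{2\kappa}(\xi_{2-2\kappa}F,Q)}$.

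For the recursions \eqref{eq:AlfesSchwagenRaising} and \eqref{eq:AlfesSchwagenLowering}, I apply the first identity to $G := R^{\kappa-\ell-1}_{2-2\kappa}F$, which has weight $2-2\kappa+2(\kappa-\ell-1) = -2\ell$, in the form $\mathcal{C}(R G) = \mathcal{C}(L G)$. Then $L R^{\kappa-\ell-1}F = L_{-2\ell+2}R_{-2\ell}\,R^{\kappa-\ell-2}F$, and I use $L_{\kappa'+2}R_{\kappa'} = -\Delta_{\kappa'} - \kappa'$ together with the standard intertwining $\Delta_{\kappa'+2}R_{\kappa'} = R_{\kappa'}(\Delta_{\kappa'}+\kappa')$. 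By induction, $R^{j}F$ is an eigenfunction with eigenvalue $\lambda + \sum(\text{weights})$, which yields the constant $(\kappa+\ell)(\kappa-\ell-1)-\lambda$. The lowering version is symmetric. I expect the main obstacle to be the bookkeeping of signs, the orientation convention for $\sgn(Q)$, and the precise powers of $D$ and $v$ when converting $Q^{\kappa-1}\mathrm{d}\tau$ to $Q^{1-\kappa}\mathrm{d}\tau$. The conceptual step, Stokes on the closed geodesic, is straightforward.
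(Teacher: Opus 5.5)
The paper only quotes this result from Alfes--Schwagenscheidt, so there is no in-paper proof to compare with. Measured against the standard argument, your strategy is the right one. An exact form of a $\Gamma_Q$-invariant function integrates to zero over the closed cycle. The recursions then follow from $L_{w+2}R_w=-\Delta_w-w$ together with the eigenvalue shift under $R$, and this does produce the constant $(\kappa+\ell)(\kappa-\ell-1)-\lambda$.

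However, your auxiliary function is wrong, so the first step fails as written. For $\gamma\in\Gamma_Q$ one has $Q(\gamma\tau,1)=j(\gamma,\tau)^{-2}Q(\tau,1)$, so $Q(\tau,1)^{m}$ transforms like weight $-2m$. Hence $Q(\tau,1)^{\kappa-1}$ has weight $2-2\kappa$, not $2\kappa-2$, and $h=FQ^{\kappa-1}$ has weight $4-4\kappa$. It is therefore not $\Gamma_Q$-invariant. Over a fundamental arc from $\tau_0$ to $\gamma_Q\tau_0$ you get $\int \mathrm{d}h=(j(\gamma_Q,\tau_0)^{4-4\kappa}-1)h(\tau_0)$, which need not vanish.

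The same defect shows up in your computation. The $\partial_\tau$-part equals $\frac{1}{2i}Q^{\kappa-1}R_{2\kappa-2}F$, which is the raising operator of weight $2\kappa-2$, not the weight of $F$. No rewriting of $Q^{\kappa-1}$ via $|Q|^2=Dv^2$ can repair this, because the identity $\int\mathrm{d}h=0$ you start from is already false.

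The repair is to take $h:=F(\tau)Q(\tau,1)^{1-\kappa}$, which is $\Gamma_Q$-invariant; then no trading of powers is needed. On $S_Q$ use $Q'/Q=1/(iv)$, $\mathrm{d}\overline{\tau}=\frac{\overline{Q}}{Q}\mathrm{d}\tau$ and $\overline{Q}=Dv^2/Q$. This gives
\[
\partial_\tau h\,\mathrm{d}\tau=\frac{1}{2i}\,(R_{2-2\kappa}F)\,Q^{1-\kappa}\,\mathrm{d}\tau,\qquad \partial_{\overline{\tau}}h\,\mathrm{d}\overline{\tau}=-\frac{D}{2i}\,(L_{2-2\kappa}F)\,Q^{-1-\kappa}\,\mathrm{d}\tau.
\]
Multiplying $\int\mathrm{d}h=0$ by $2iD^{\frac{\kappa}{2}-\frac{1}{2}}$ gives exactly $\mathcal{C}_{4-2\kappa}(R_{2-2\kappa}F,Q)=\mathcal{C}_{-2\kappa}(L_{2-2\kappa}F,Q)$, with no stray sign. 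A relative sign could not have been absorbed by the orientation convention anyway, since both integrals run over the same oriented cycle.

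Two smaller points remain. First, your conjugation step is right: $\overline{Q}^{\kappa-1}\mathrm{d}\overline{\tau}=D^{\kappa}v^{2\kappa}Q^{-1-\kappa}\mathrm{d}\tau$ on $S_Q$. But it gives $\overline{\mathcal{C}_{2\kappa}(\xi_{2-2\kappa}F,Q)}=\mathcal{C}_{-2\kappa}(L_{2-2\kappa}F,Q)$ only for $\xi_{2-2\kappa}F=v^{-2\kappa}\overline{L_{2-2\kappa}F}$. That is the convention of Subsection~\ref{subsec:DifferentialOperators} and item (3) of Lemma~\ref{lem:BringKaneLemmata}; with the minus sign you took from the introduction you get the negative.

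Second, $a=0$ is analogous only for the pointwise identities. It forces $D=b^2$, so $\Gamma_Q$ is trivial and $S_Q$ is not a closed cycle. Then $\int\mathrm{d}h$ acquires boundary terms at the cusps that need separate control.
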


\subsection{Bivariate functions transforming like modular forms} Recall the functions $H_{k_1,k_2}$ from the introduction. The main idea in \cite{Stum26} was combining the following lemma with Eqn. \eqref{eq:AlfesSchwagenRaising} in Theorem \ref{thm:AlfesSchwagen}.

\begin{lemma}[\protect{\cite{lsmeromorphic}*{Lemma 3.2}}] \label{lem:PeterssonPoincareRaisingLowering}
    For $k_1 \in \N_{\geq 2}$ and $k_2 \in \Z$, it holds that
    \begin{align*}
        R_{-2k_2,\tau}H_{k_1,k_2}(z,\tau) &= (k_1 - k_2)H_{k_1,k_2-1}(z,\tau),\\
        L_{-2k_2,\tau}H_{k_1,k_2}(z,\tau) &= (k_1 + k_2)H_{k_1,k_2+1}(z,\tau).
    \end{align*}
\end{lemma}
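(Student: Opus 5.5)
The plan is to reduce both identities to a direct computation on the seed function of the Poincaré-type series. Write
\[
h_{k_1,k_2}(z,\tau) \coloneqq \frac{v^{k_1+k_2}}{(z-\tau)^{k_1-k_2}(z-\overline{\tau})^{k_1+k_2}},
\qquad\text{so that}\qquad
H_{k_1,k_2}(z,\tau) = \sum_{\gamma\in\Gamma} h_{k_1,k_2}(z,\tau)\big|_{-2k_2,\tau}\gamma .
\]
By the commutation relations $R_\kappa f|_{\kappa+2}\gamma = R_\kappa(f|_\kappa\gamma)$ and $L_\kappa f|_{\kappa-2}\gamma = L_\kappa(f|_\kappa\gamma)$ with $\kappa=-2k_2$, it suffices to show
\[
R_{-2k_2,\tau}h_{k_1,k_2} = (k_1-k_2)\,h_{k_1,k_2-1}
\qquad\text{and}\qquad
L_{-2k_2,\tau}h_{k_1,k_2} = (k_1+k_2)\,h_{k_1,k_2+1}.
\]
Applying the operators termwise then turns the right-hand sides into $H_{k_1,k_2-1}$ (with weight $-2k_2+2$) and $H_{k_1,k_2+1}$ (with weight $-2k_2-2$), respectively.

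The only analytic point is interchanging the differential operators with the sum over $\Gamma$. Since $k_1\ge 2$, the series and their termwise $\tau$-derivatives converge absolutely and locally uniformly away from the $\Gamma$-orbit of $z$. This follows from the standard estimate $|h_{k_1,k_2}|\ll v^{k_1+k_2}|z-\overline\tau|^{-2k_1}\cdot(\text{bounded factor})$ and comparison with a weight-$2k_1$ Poincaré series. I expect this justification to be the only step requiring any care, and it is routine.

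For the computation, use $\partial_\tau v = \tfrac{1}{2i}$ and $\partial_{\overline\tau} v = -\tfrac{1}{2i}$, together with the facts that $(z-\overline\tau)$ is annihilated by $\partial_\tau$ and $(z-\tau)$ by $\partial_{\overline\tau}$. Raising with $R_{-2k_2}=2i\partial_\tau - 2k_2/v$ gives
\[
R_{-2k_2,\tau}h_{k_1,k_2} = \Big(\tfrac{k_1+k_2}{v} + \tfrac{2i(k_1-k_2)}{z-\tau} - \tfrac{2k_2}{v}\Big)h_{k_1,k_2}
= (k_1-k_2)\,\frac{z-\tau+2iv}{v(z-\tau)}\,h_{k_1,k_2}.
\]
Since $z-\tau+2iv = z-\overline\tau$, this equals $(k_1-k_2)h_{k_1,k_2-1}$.

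Lowering with $L_{-2k_2}=-2iv^2\partial_{\overline\tau}$ gives
\[
L_{-2k_2,\tau}h_{k_1,k_2} = -2iv^2\Big(-\tfrac{k_1+k_2}{2iv} + \tfrac{k_1+k_2}{z-\overline\tau}\Big)h_{k_1,k_2}
= (k_1+k_2)\,v\,\frac{z-\overline\tau-2iv}{z-\overline\tau}\,h_{k_1,k_2}.
\]
Since $z-\overline\tau-2iv = z-\tau$, this equals $(k_1+k_2)h_{k_1,k_2+1}$. Summing over $\Gamma$ completes the argument.
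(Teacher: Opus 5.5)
Your proposal is correct: the seed computations for $R_{-2k_2,\tau}$ and $L_{-2k_2,\tau}$, using $z-\tau+2iv=z-\overline{\tau}$ and $z-\overline{\tau}-2iv=z-\tau$, check out, and the termwise differentiation is justified by comparison with the weight $2k_1$ Petersson Poincar\'e series for $k_1\geq 2$. The paper gives no proof of its own and simply cites L\"{o}brich--Schwagenscheidt, Lemma 3.2; your direct differentiation of the seed, combined with the slash-compatibility of the Maass operators, is presumably the verification behind that citation.
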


As it was pointed out in \cite{Stum26}, these identities are imitated by $\mathscr{P}_{k_1,k_2}$ in the sense that
\begin{align}
R_{-2k_2,\tau}\mathscr{P}_{k_1,k_2}(z,\tau) &= (k_1 - k_2)\mathscr{P}_{k_1,k_2-1}(z,\tau),\label{eq:RaisingMaassOnSerifenP}\\
L_{-2k_2,\tau}\mathscr{P}_{k_1,k_2}(z,\tau) &= (k_1 + k_2)\mathscr{P}_{k_1,k_2+1}(z,\tau)\notag
\end{align}
for $k_1 \in \N_{\geq 2}$ and $k_2 \in \Z$.

\section{Extending the Diagram}\label{sec:ExtendingTheDiagram}

The first objective is to establish the commutativity of the diagram at the end of the introduction concerning the functions from Definition \ref{defn:CurlyJIota}. As mentioned by Bringmann and Kane in \cite{BringKane}*{Subsection 2.2}, we have the following technical lemma at our disposal.

\begin{lemma}[\cite{BringKane}*{Lemmata 2.1, 2.2, 2.3, 2.4, 2.5}]\label{lem:BringKaneLemmata}
    The following statements are true.
    \begin{enumerate}
        \item Suppose $f$ is a weak Maass form of weight $\kappa \in \Z$ with eigenvalue $\lambda$. Then $L_\kappa f$ is a weak Maass form of weight $\kappa - 2$ with eigenvalue $\lambda - \kappa + 2$ and $R_\kappa f$ is a weak Maass form of weight $\kappa + 2$ with eigenvalue $\lambda + \kappa$.
        \item If $f$ transforms like a modular form of weight $\kappa$, then the function $y^\kappa \overline{f(z)}$ transforms like a modular form of weight $-\kappa$.
        \item If $f$ is real-analytic in some neighbourhood of $z$, we have
        \[
        y^{2-\kappa}\overline{\xi_\kappa f(z)} = L_\kappa f(z)
        \]
        as well as
        \[
        R_{-\kappa}\left(y^\kappa\overline{f(z)}\right) = \xi_\kappa f(z).
        \]
        \item If $f$ satisfies $\Delta_\kappa = \lambda f$ in some neighbourhood of $z$, then
        \[\Delta_{-\kappa}\left(y^\kappa\overline{f(z)}\right) = (\overline{\lambda} + \kappa)y^\kappa\overline{f(z)}.\]
    \end{enumerate}
\end{lemma}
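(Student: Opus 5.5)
We prove the four items separately by direct computation. Two operator identities do most of the work: the factorisation
\[
\Delta_\kappa = -R_{\kappa-2}L_\kappa = -L_{\kappa+2}R_\kappa - \kappa,
\]
which we verify once by expanding in $\partial_z,\partial_{\overline z}$ using $\partial_z y = \frac{1}{2i}$ and $\partial_{\overline z} y = -\frac{1}{2i}$, and the compatibility of $R_\kappa, L_\kappa$ with the slash operator recalled in Subsection \ref{subsec:LocalMaass}. Throughout we use the normalisation $\xi_\kappa f = 2iy^\kappa\overline{\partial_{\overline z}f}$, which is what $\xi_\kappa = -y^{\kappa-2}\overline{L_\kappa}$ gives when $-y^{\kappa-2}$ is read as multiplying the conjugated function.

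For (1), modularity of $R_\kappa f$ and $L_\kappa f$ in weights $\kappa\pm2$ follows from the slash compatibility. From the factorisation we get
\[
\Delta_{\kappa+2}R_\kappa = -R_\kappa L_{\kappa+2}R_\kappa = R_\kappa(\Delta_\kappa+\kappa),
\]
so $R_\kappa f$ has eigenvalue $\lambda+\kappa$. Similarly
\[
\Delta_{\kappa-2}L_\kappa = -L_\kappa R_{\kappa-2}L_\kappa-(\kappa-2)L_\kappa = L_\kappa(\Delta_\kappa-\kappa+2),
\]
so $L_\kappa f$ has eigenvalue $\lambda-\kappa+2$. The growth condition is preserved because the operators only involve derivatives and powers of $y$, which are controlled through the Fourier expansion of a weak Maass form. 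For (2) we use $\operatorname{Im}(\gamma z)=y/|j(\gamma,z)|^2$. This gives
\[
(\operatorname{Im}\gamma z)^\kappa\,\overline{f(\gamma z)} = y^\kappa|j|^{-2\kappa}\,\overline{j}^{\,\kappa}\,\overline{f(z)} = j(\gamma,z)^{-\kappa}\,y^\kappa\overline{f(z)},
\]
which is weight $-\kappa$ modularity.

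For (3), both identities are local computations. First,
\[
y^{2-\kappa}\overline{\xi_\kappa f} = y^{2-\kappa}(-2i)y^\kappa\partial_{\overline z}f = L_\kappa f.
\]
Second, in $R_{-\kappa}(y^\kappa\overline f)$ the product rule term $2i\cdot\kappa y^{\kappa-1}\cdot\frac{1}{2i}\overline f$ cancels against $-\kappa y^{\kappa-1}\overline f$. What remains is $2iy^\kappa\overline{\partial_{\overline z}f}=\xi_\kappa f$.

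For (4), the main step, set $g:=y^\kappa\overline f$ and write $\Delta_{-\kappa}g = -L_{2-\kappa}R_{-\kappa}g+\kappa g$. By (3), $R_{-\kappa}g=\xi_\kappa f = y^{\kappa-2}\overline{h}$ with $h:=L_\kappa f$. Next apply the first identity of (3) in weight $2-\kappa$ to $G:=y^{\kappa-2}\overline h$, and then the second identity of (3) in weight $2-\kappa$, noting that $y^{2-\kappa}\overline G = h$. This gives
\[
L_{2-\kappa}G = y^{\kappa}\overline{\xi_{2-\kappa}G} = y^\kappa\overline{R_{\kappa-2}h} = y^\kappa\overline{R_{\kappa-2}L_\kappa f} = -y^\kappa\overline{\Delta_\kappa f} = -\overline\lambda g.
\]
Hence $\Delta_{-\kappa}g = \overline\lambda g+\kappa g = (\overline\lambda+\kappa)g$, exactly as stated.

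The only real obstacle is sign bookkeeping: the normalisation of $\xi_\kappa$ and the choice between the two factorisations of $\Delta$. The sign of the $\kappa$-term in (4) depends on these, so each should be checked on the test function $f=y^s$ before relying on it.
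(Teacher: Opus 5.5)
Your proof is correct. The paper gives no proof of this lemma; it cites Bringmann--Kane's Lemmata 2.1--2.5, so there is no competing route to compare against. Your direct verification is the standard argument: the two factorisations $\Delta_\kappa=-R_{\kappa-2}L_\kappa=-L_{\kappa+2}R_\kappa-\kappa$, compatibility with the slash operator, the cocycle computation for (2), and the local identities of (3) used twice to get (4). I checked each step, including the sign $\overline{\lambda}+\kappa$ in (4). It agrees with your suggested test case: for $f=y^s$ with $s$ real, $\lambda=-s(s+\kappa-1)$ and $\Delta_{-\kappa}y^{s+\kappa}=-(s+\kappa)(s-1)y^{s+\kappa}$.

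One correction concerns your remark on normalisation. Reading $\xi_\kappa=-y^{\kappa-2}\overline{L_\kappa}$ as $f\mapsto -y^{\kappa-2}\overline{L_\kappa f}$ gives $-2iy^{\kappa}\overline{\partial_{\overline z}f}$, not $+2iy^{\kappa}\overline{\partial_{\overline z}f}$, because $\overline{-2i}=2i$. With that sign, both identities in (3) would pick up a minus sign.

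The paper's two definitions of the shadow operator (in the introduction and in Subsection 2.2) differ by a sign. Item (3) holds exactly for $\xi_\kappa=y^{\kappa-2}\overline{L_\kappa}=2iy^\kappa\overline{\partial_{\overline z}}$, which is the Subsection 2.2 definition. Justify your normalisation by that definition rather than the introduction's formula. Items (1), (2) and (4) do not depend on this choice.

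Your growth argument in (1) is only sketched but is the usual one. The bound $O(e^{\varepsilon y})$ excludes the growing Whittaker solution for all but finitely many Fourier modes. Termwise differentiation then preserves at most exponential growth at each cusp.
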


These relations imply the following proposition establishing some basic properties of $\mathcal{J}_{-1-k,D}$ and $\iota_{k+2,D}$ resembling the ones proven by Bringmann and Kane for $\mathcal{W}_{-k,D}$ and $\omega_{k+1,D}$.

\begin{prop}\label{prop:BasicPropsCurlyJIota}
    Let $k > 2$ and $D > 0$ a fundamental discriminant. The following statements are true:
    \begin{enumerate}
        \item The function $\iota_{k+2,D}$ is a local weak Maass form of weight $2k + 4$ with eigenvalue $2k + 2$ and exceptional set $E_D$. Furthermore, we have the representation
        \begin{equation*}
            \iota_{k+2,D}(z) = \sum_{Q \in \mathcal{Q}_D} \frac{|Q_z|}{Q(z,1)^{k+2}}
        \end{equation*}
        for $z \in \H \setminus E_D$.
        \item The function $\iota_{k+2,D}$ is a locally almost holomorphic modular form of weight $2k + 4$ and a splitting into a locally holomorphic and a locally non-holomorphic part is given by
        \[\iota_{k+1,D}(z) = \frac{g_{k+1,D}(z)}{y} - i\sum_{Q \in \mathcal{Q}_D}\frac{\sgn(Q_z)Q'(z,1)}{Q(z,1)^{k+2}}.\]
        \item The function $\mathcal{J}_{-1-k,D}$ is a local weak Maass form of weight $-2-2k$ with eigenvalue $2k + 2$ and exceptional set $E_D$.
        \item We have
        \[\xi_{-2-2k}\mathcal{J}_{-1-k,D}(z) = (2k + 2)\iota_{k+2,D}(z).\]
        and
        \[\xi_{2k+4}\iota_{k+2,D}(z) = -\mathcal{J}_{-1-k,D}(z).\]
    \end{enumerate}
\end{prop}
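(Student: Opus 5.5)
The plan is to work locally on $\H \setminus E_D$, where $E_D$ is the union of the geodesics $S_Q$ (equivalently, the points with $Q_z = 0$ for some $Q \in \mathcal{Q}_D$). On each connected component of $\H \setminus E_D$ the signs $\sgn(Q_z)$ are constant. Hence $g_{k+1,D}$ is holomorphic there, of weight $2k+2$, and in particular harmonic with eigenvalue $0$. All four claims then reduce to Lemma \ref{lem:BringKaneLemmata} together with explicit termwise differentiation. The termwise differentiation is justified by the locally uniform absolute convergence of the defining series for $k>2$, which also holds for the differentiated series.

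For (1) and (2), I would apply $R_{2k+2} = 2i\partial_z + \frac{2k+2}{y}$ summand by summand. With $\sgn(Q_z)$ locally constant this gives
\[
\frac{R_{2k+2}}{2k+2}\frac{\sgn(Q_z)}{Q(z,1)^{k+1}} = \sgn(Q_z)\left(\frac{1}{yQ(z,1)^{k+1}} - \frac{iQ'(z,1)}{Q(z,1)^{k+2}}\right).
\]
Summing over $Q$ gives the splitting in (2): a locally holomorphic part plus $y^{-1}$ times the locally holomorphic $g_{k+1,D}$. For the representation in (1), I would use the elementary identity
\[
Q(z,1) - iyQ'(z,1) = a z(z-2iy) + b(z-iy) + c = a|z|^2 + bx + c = yQ_z .
\]
This turns each summand into $\sgn(Q_z)Q_z/Q(z,1)^{k+2} = |Q_z|/Q(z,1)^{k+2}$. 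Modularity of $\iota_{k+2,D}$ follows from the commutation of $R$ with the slash operator. Alternatively, it follows from the explicit formula via $(Q\circ\gamma)(z,1) = j(\gamma,z)^2 Q(\gamma z,1)$ and the invariance of $Q_z$ under the action. The eigenvalue $0 + (2k+2) = 2k+2$ comes from Lemma \ref{lem:BringKaneLemmata}(1), applied on each component. The averaging condition on $E_D$ holds automatically, because the series $\sum_Q |Q_z|/Q(z,1)^{k+2}$ is continuous across $E_D$.

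For (3), I would combine Lemma \ref{lem:BringKaneLemmata}(2) (modularity of weight $-2-2k$) with Lemma \ref{lem:BringKaneLemmata}(4), applied with $\lambda = 0$ and $\kappa = 2k+2$, which gives the eigenvalue $2k+2$ on $\H\setminus E_D$. The boundary condition on $E_D$ and the growth condition are inherited from those of $g_{k+1,D}$. For (4), a direct computation with $\xi_\kappa f = 2iy^\kappa\overline{\partial_{\bar z} f}$ and $\partial_{\bar z}y = \frac{i}{2}$ gives
\[
\xi_{-2-2k}\left(y^{2k+2}\overline{g}\right) = 2i\,g' + \frac{2k+2}{y}\,g = R_{2k+2}\,g = (2k+2)\,\iota_{k+2,D}.
\]
This is also the second identity of Lemma \ref{lem:BringKaneLemmata}(3). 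For the second identity of (4), the splitting from (2) shows that only the term $g/y$ is non-holomorphic. Since $\partial_{\bar z}(g/y) = -\frac{i g}{2y^2}$, we obtain
\[
\xi_{2k+4}\,\iota_{k+2,D} = 2iy^{2k+4}\cdot\frac{i\,\overline{g}}{2y^2} = -y^{2k+2}\overline{g} = -\mathcal{J}_{-1-k,D}.
\]

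The main obstacle I expect is not the algebra but the rigorous handling of the local nature of the objects. One must check that the operators may be applied termwise on each component of $\H\setminus E_D$ and that the resulting functions satisfy the definition of a local weak Maass form. Concretely, the identity $\Delta f = \lambda f$ holds only off $E_D$, and the averaging condition must be verified on $E_D$. For $\iota_{k+2,D}$, the continuity of the $|Q_z|$-representation settles the averaging condition. For $\mathcal{J}_{-1-k,D}$, it reduces to the corresponding known property of $g_{k+1,D}$ from \cite{brimo} and \cite{mo21}.
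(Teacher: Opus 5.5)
Your proposal is correct and follows essentially the paper's route: you work on the components of $\H\setminus E_D$, where $\sgn(Q_z)$ is constant and $g_{k+1,D}$ is holomorphic, and apply items (1)--(4) of Lemma~\ref{lem:BringKaneLemmata}, with the $|Q_z|$-representation coming from $Q(z,1)-iyQ'(z,1)=yQ_z$. The only deviations are that you obtain $\xi_{2k+4}\iota_{k+2,D}=-\mathcal{J}_{-1-k,D}$ by differentiating the splitting in (2) directly, where the paper uses $\Delta_{-2-2k}=-\xi_{2k+4}\circ\xi_{-2-2k}$ together with the eigenvalue from (3), and that you also observe the $|Q_z|$-series is continuous across $E_D$, which settles the averaging condition the paper leaves implicit.
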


\begin{proofa}
    We first notice that the differential operators only depend on local circumstances and hence we may apply Lemma \ref{lem:BringKaneLemmata} to these local weak Maass forms.
    \begin{enumerate}
        \item The first part follows directly by item (1) of Lemma \ref{lem:BringKaneLemmata} and the exceptional set is given by the same property applying to $g_{k+1,D}$ due to \cite{mo21}*{item (2) of Theorem 1.1}. In each connected component of $\H \setminus E_D$, the expressions $\sgn(Q_z)$ are constant and hence we obtain the representation of $\iota_{k+2,D}$ analogously to Theorem \ref{thm:BringKaneOmegaFRelation} since $|Q_z| = \sgn(Q_z)Q_z$.
        \item This is obtained via directly applying the operator $R_{2k+2}$ to $g_{k+1,D}$.
        \item Applying items (2) and (4) of Lemma \ref{lem:BringKaneLemmata} to the definition of $\mathcal{J}_{-1-k,D}$ yields the claim as $g_{k+1,D}$ is locally holomorphic of weight $2k + 2$ and thus $\Delta_{2k+2}g_{k+1,D}(z) = 0$. The exceptional set is given in the same manner as above.
        \item The operator
        \[f(z) \mapsto y^\kappa\overline{f(z)}\]
        is an involution and hence the first identity follows from item (3) of Lemma \ref{lem:BringKaneLemmata}. Using the identity $\Delta_\kappa = -\xi_{2-\kappa} \circ \xi_\kappa$, we obtain
        \begin{align*}
            (2k + 2)\mathcal{J}_{-1-k,D}(z) &= \Delta_{-2 - 2k}\mathcal{J}_{-1-k,D}(z) = -(\xi_{2k + 4} \circ \xi_{-2-2k})\mathcal{J}_{-1-k,D}(z)\\
            &= -(2k + 2)\xi_{2k+4}\iota_{k + 2,D}(z),
        \end{align*}
        as claimed. \hfill $\square$
    \end{enumerate}
\end{proofa}

Analogously to the construction of $\mathcal{F}_{1-k,D}$ by Bringmann, Kane, and Kohnen in \cite{bkk}, Bringmann and Mono \cite{brimo} constructed the function
\[
\mathcal{G}_{-k,D}: \H \to \C, \quad z \mapsto \frac{1}{2}\sum_{Q \in \mathcal{Q}_D}Q(z,1)^k\beta\left(\frac{Dy^2}{|Q(z,1)|^2};k + \frac{1}{2},\frac{1}{2}\right)
\]
which is a locally harmonic Maass form of weight $-2k$ and a preimage of $g_{k+1,D}$ under both the shadow and the Bol operator. With this in hand, we offer the following proposition.

\begin{prop}\label{prop:LoweringOpCurlyG}
    It holds that
    \[\mathcal{J}_{-1-k,D}(z) = D^{-\frac{1}{2}-k}L_{-2k}\mathcal{G}_{-k,D}(z)\]
    for $z \in \H \setminus E_D$.
\end{prop}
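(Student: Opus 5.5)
The plan is a direct termwise computation of $L_{-2k}=-2iy^2\frac{\partial}{\partial\overline{z}}$ applied to the series defining $\mathcal{G}_{-k,D}$. Fix $z$ in a connected component of $\H\setminus E_D$. There every $\sgn(Q_z)$ is constant, and the series for $\mathcal{G}_{-k,D}$ together with its termwise $\overline{z}$-derivatives converges locally uniformly for $k>2$, as in \cite{brimo}. So we may differentiate term by term. In each summand the factor $Q(z,1)^k$ is holomorphic, so only the incomplete $\beta$-function contributes. Writing $x\coloneqq \frac{Dy^2}{|Q(z,1)|^2}$, the fundamental theorem of calculus gives
\[
\frac{\partial}{\partial\overline{z}}\beta\left(x;k+\tfrac12,\tfrac12\right)=x^{k-\frac12}(1-x)^{-\frac12}\frac{\partial x}{\partial\overline{z}}.
\]

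Two elementary identities for $Q=[a,b,c]\in\mathcal{Q}_D$ are needed.
\begin{enumerate}
\item $y^2Q_z^2=|Q(z,1)|^2-Dy^2$. This gives $1-x=\frac{y^2Q_z^2}{|Q(z,1)|^2}$, hence $(1-x)^{-\frac12}=\frac{|Q(z,1)|}{y|Q_z|}$. It also shows $x<1$ away from $E_D$.
\item $i\overline{Q(z,1)}-y\overline{Q'(z,1)}=iyQ_z$. Using $\frac{\partial y}{\partial\overline{z}}=\frac i2$, this yields
\[
\frac{\partial x}{\partial\overline{z}}=\frac{Dy\left(i\overline{Q(z,1)}-y\overline{Q'(z,1)}\right)}{Q(z,1)\overline{Q(z,1)}^{2}}=\frac{iDy^2Q_z}{Q(z,1)\overline{Q(z,1)}^{2}}.
\]
\end{enumerate}
Both identities reduce to expanding $|az+\ldots|^2$ and $b^2-4ac=D$. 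I checked them on $Q=[1,0,-D/4]$ at $z=iy$, which fixes the signs.

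Next, insert these into the summand. Use $|Q(z,1)|^{1-2k}\cdot|Q(z,1)|=Q(z,1)^{1-k}\overline{Q(z,1)}^{1-k}$ and $Q_z/|Q_z|=\sgn(Q_z)$. The factor $Q(z,1)^k$ then cancels completely, and each term becomes
\[
\frac{\partial}{\partial\overline{z}}\left(Q(z,1)^k\beta\left(x;k+\tfrac12,\tfrac12\right)\right)=iD^{k+\frac12}y^{2k}\frac{\sgn(Q_z)}{\overline{Q(z,1)}^{k+1}}.
\]
Multiplying by $\frac12$ and by $-2iy^2$ and summing over $Q\in\mathcal{Q}_D$ gives
\[
L_{-2k}\mathcal{G}_{-k,D}(z)=D^{k+\frac12}y^{2k+2}\,\overline{g_{k+1,D}(z)}=D^{k+\frac12}\mathcal{J}_{-1-k,D}(z),
\]
which is exactly the claim.

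As a cross-check, one can instead use item (3) of Lemma \ref{lem:BringKaneLemmata}: $L_{-2k}\mathcal{G}_{-k,D}=y^{2+2k}\overline{\xi_{-2k}\mathcal{G}_{-k,D}}$, combined with the shadow relation $\xi_{-2k}\mathcal{G}_{-k,D}=c\,g_{k+1,D}$ from \cite{brimo}. This reduces the proposition to checking that the normalising constant is $c=D^{k+\frac12}$, and the computation above produces precisely that constant.

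The main obstacle is bookkeeping rather than ideas. One has to get the signs and powers of $D$, $y$ and $|Q|$ exactly right, chiefly in identity (ii) and in handling $\sgn(Q_z)$. One also has to justify termwise differentiation near, but off, the exceptional set $E_D$.
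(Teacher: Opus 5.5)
Your proof is correct, but it takes a different route from the paper. The paper's proof is a two-line citation, and it is exactly your ``cross-check''. Item (3) of Lemma~\ref{lem:BringKaneLemmata} gives $L_{-2k}\mathcal{G}_{-k,D}=y^{2+2k}\overline{\xi_{-2k}\mathcal{G}_{-k,D}}$. The Eichler-integral decomposition of $\mathcal{G}_{-k,D}$ in \cite{brimo} (item (2) of Theorem 1.3 there) then gives $\xi_{-2k}\mathcal{G}_{-k,D}=D^{k+\frac12}g_{k+1,D}$: $\xi_{-2k}$ kills the constant and the holomorphic Eichler integral and sends $g_{k+1,D}^*$ to $g_{k+1,D}$.

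Your main argument instead differentiates the defining series term by term, which amounts to redoing the Bringmann--Mono shadow computation from scratch. I checked both of your identities, the resulting summand $iD^{k+\frac12}y^{2k}\sgn(Q_z)\overline{Q(z,1)}^{-k-1}$, and the final constant; all are right.

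The paper's route is shorter, but it imports the normalisations of $\xi$ and of $g^*_{k+1,D}$ from \cite{brimo}. That matters here. The introduction's convention $\xi_\kappa=-y^{\kappa-2}\overline{L_\kappa}$ differs in sign from the $\xi$ of Section 2, and item (3) of the lemma requires the latter. Your computation uses only $L_{-2k}=-2iy^2\frac{\partial}{\partial\overline{z}}$ and the definition of $\mathcal{G}_{-k,D}$. It therefore confirms the sign and the constant $D^{k+\frac12}$ independently of any convention. It also shows where the twist $\sgn(Q_z)$ in $g_{k+1,D}$ comes from, namely the quotient $Q_z/|Q_z|$.

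The extra cost is justifying termwise differentiation, which is routine. Only finitely many $S_Q$ meet a given compact set, and the differentiated terms are $O(|Q(z,1)|^{-k-1})$ locally uniformly.
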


\begin{proof}
    Invoking item (3) of Lemma \ref{lem:BringKaneLemmata} and \cite{brimo}*{item (2) of Theorem 1.3} yields the claim.
\end{proof}

\begin{rmk}
    Item (2) of Theorem 1.3 in \cite{brimo} established a representation of $\mathcal{G}_{-k,D}$ as sum of a holomorphic and a non-holomorphic Eichler integral of $g_{k+1,D}$ in the following sense:
    \[\mathcal{G}_{-k,D}(z) = c_\infty - \frac{D^{k + \frac{1}{2}}(2k)!}{(4\pi)^{2k+1}}\mathcal{E}_{g_{k+1,D}}(z) + D^{k + \frac{1}{2}} g_{k+1,D}^*(z)\]
    where $c_\infty$ is a certain constant. While the holomorphic part vanishes under $L_{-2k}$, the non-holomorphic part transmutes into $\mathcal{J}_{-1-k,D}$ by standard arguments.
\end{rmk}

\section{Representations as Traces of Cycle Integrals}

In this section, we exploit the relations given with respect to the various differential operators to obtain representations of functions as traces of cycle integrals using those functions for which such a representation is already known. One major resource is provided by the following theorem.

\begin{thm}[\cite{Stum26}*{Theorem 1.1}]\label{thm:Stum26Result}
    Let $k \in 2\N_{\geq 2}$ and, for $m \in 2\N$ with $m \leq 2k$, the constant $K_m$ be given by
    \[ \frac{2\Gamma(2k + 2)\prod_{j = 0}^{k-1}\left(k + j + 1\right)}{D^{\frac{k + 1}{2}}\Gamma\left(\frac{k+1}{2}\right)^2\prod_{l = 1 - k}^{-1}\left((1 + l)(-l) + k(k + 1)\right)}\prod_{l = 0}^{\frac{m}{2} - 1}\frac{(l + 1)}{(k - l)}.\]
    We have
    \[g_{k+1,D}(z) = K_m \sum_{Q \in \mathcal{Q}_D/\Gamma} \mathcal{C}_{2(m-k)}\left(H_{k+1,k-m}(z, \tau),Q\right)\]
    for $m \in 2\N$ with $m \leq 2k$ and
    \begin{multline*}
        f_{k,D}(z) = \frac{(-1)^{\frac{m}{2}}(2k - 1)}{D^{\frac{k}{2}}} \binom{2k - 2}{k - 1} \left(\prod_{l = 0}^{\frac{m}{2}-1}\frac{2(k + l) + 1}{2l + 1}\right)\\
        \times \sum_{Q \in \mathcal{Q}_D/\Gamma}\mathcal{C}_{2(k+m)}\left(\mathscr{P}_{k,-(k+m)}(z, \tau), Q\right)
    \end{multline*}
    for each $m \in 2\N$.
\end{thm}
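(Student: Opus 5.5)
The plan is induction in steps of two on $m$, starting from the two known base cases. For $g_{k+1,D}$ the base case $m=0$ is Mono's representation, with constant $K_0$ equal to the displayed prefactor (the empty product is $1$). For $f_{k,D}$ the base case $m=0$ is \cite{Stum26}*{Theorem 1.2}, i.e.\ the representation via $\mathscr{P}_{k,-k}$. The inductive step combines Lemma~\ref{lem:PeterssonPoincareRaisingLowering} (resp.\ \eqref{eq:RaisingMaassOnSerifenP}) with Eqn.~\eqref{eq:AlfesSchwagenRaising} of Theorem~\ref{thm:AlfesSchwagen}, always regarding the bivariate functions as functions of $\tau$ with $z$ fixed. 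This is the approach suggested in the text after Theorem~\ref{thm:AlfesSchwagen}.

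First, I determine the Laplace eigenvalues. Since $\Delta_\kappa = -R_{\kappa-2}\circ L_\kappa$, Lemma~\ref{lem:PeterssonPoincareRaisingLowering} gives
\[\Delta_{-2k_2,\tau}H_{k_1,k_2}(z,\tau) = -(k_1+k_2)(k_1-k_2-1)H_{k_1,k_2}(z,\tau),\]
and the same holds for $\mathscr{P}_{k_1,k_2}$. Hence $F=H_{k+1,k}(z,\cdot)$ (weight $-2k$, so $\kappa=k+1$ in Theorem~\ref{thm:AlfesSchwagen}) and $F=\mathscr{P}_{k,-k}(z,\cdot)$ (weight $2k$, so $\kappa=1-k$) are both harmonic, i.e.\ $\lambda=0$.

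Iterating the raising identities gives
\[R^m H_{k+1,k} = m!\,H_{k+1,k-m} \quad\text{and}\quad R^m\mathscr{P}_{k,-k}=\prod_{j=0}^{m-1}(2k+j)\,\mathscr{P}_{k,-k-m}.\]
For the $g$-case I insert $\ell=\kappa-m=k+1-m$ into \eqref{eq:AlfesSchwagenRaising}; the side condition $\ell\le\kappa-2$ amounts to $m\ge2$. This yields
\[m!\,\mathcal{C}_{2(m-k)}(H_{k+1,k-m},Q)=(2k+2-m)(m-1)(m-2)!\,\mathcal{C}_{2(m-2-k)}(H_{k+1,k-m+2},Q),\]
that is,
\[\mathcal{C}(H_{k+1,k-m+2}) = \frac{m}{2k+2-m}\,\mathcal{C}(H_{k+1,k-m}).\]
With $l=\tfrac m2-1$ this ratio is exactly $\tfrac{l+1}{k-l}=K_m/K_{m-2}$. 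Summing over $Q\in\mathcal{Q}_D/\Gamma$ then propagates the identity from $m-2$ to $m$ for all even $m\le 2k$.

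The $f$-case is analogous with $\ell=1-k-m$. The factor from \eqref{eq:AlfesSchwagenRaising} is $(2-2k-m)(m-1)$, and it is negative. Dividing it by the ratio of the two raising products leaves
\[-\frac{2k+m-1}{m-1}\cdot\frac{(m-1)}{\ldots},\]
which I expect to reduce to the factor $-\tfrac{2(k+l)+1}{2l+1}$ with $l=\tfrac m2-1$. This produces both the sign $(-1)^{m/2}$ and the product in the statement. Here there is no upper bound on $m$, since $\ell\le\kappa-2$ always holds.

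The main obstacle is analytic. Theorem~\ref{thm:AlfesSchwagen} is stated for smooth $F$, whereas $H_{k+1,k-m}(z,\tau)$ is singular at $\tau\in\Gamma z$. To handle this, I would first prove the identities for $z\notin E_D$, where $\Gamma z$ does not meet any Heegner geodesic $S_Q$. In that case one can apply the Alfes--Schwagenscheidt argument (integration by parts along the closed cycle $\Gamma_Q\backslash S_Q$) to $F$ on a neighbourhood of $S_Q$, where $F$ is smooth and the local relations above hold. Both sides of the resulting identity are continuous in $z$: $g_{k+1,D}$ via its averaging property on $E_D$, and $f_{k,D}$ directly, since $\mathscr{P}$ has its singularities at $\tau=-\overline{\gamma z}$, away from the geodesics. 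The identity on $E_D$ then follows by the averaging/continuity argument. I also still need absolute convergence of the Poincaré-type series to justify termwise differentiation; this should hold since $k_1\ge2$.
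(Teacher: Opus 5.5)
Your proposal is correct and follows the route the paper attributes to \cite{Stum26}: start from the $m=0$ cases (Mono's representation and \cite{Stum26}*{Theorem 1.2}) and pass from $m-2$ to $m$ by combining Lemma~\ref{lem:PeterssonPoincareRaisingLowering} (resp.\ \eqref{eq:RaisingMaassOnSerifenP}) with \eqref{eq:AlfesSchwagenRaising} for the harmonic seeds $H_{k+1,k}$ and $\mathscr{P}_{k,-k}$. Your ratios $\frac{m}{2k+2-m}$ and $-\frac{2k+m-1}{m-1}$ are exactly the factors in $K_m$ and in the $f_{k,D}$-constant. The garbled intermediate expression in your $f$-step does resolve to the second ratio once you cancel $2k+m-2$.

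Two side remarks in your analytic paragraph are wrong, though harmlessly so. First, $\mathscr{P}_{k,-(k+m)}(z,\cdot)$ has no singularities in $\H$ at all: the factor $z+\overline{\tau}$ appears only with nonnegative exponent $m$, and $\gamma z+\tau\neq 0$. Second, $g_{k+1,D}$ is not continuous across $E_D$, so the $g_{k+1,D}$-identity should simply be asserted for $z\notin E_D$ rather than extended by continuity.
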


\subsection{Weak Maass forms} In the case of $\mathcal{W}_{-k,D}$ and $\omega_{k+1,D}$, the second identity of Theorem \ref{thm:Stum26Result} poses as a crucial ingredient. Using the relations proven by Bringmann and Kane in \cite{BringKane}, we present the following proof.

\begin{proof}[Proof of Theorem \ref{thm:MainResultWeakMaassForms}]
    We obtain
    \begin{align*}
        \mathcal{W}_{-k,D}(z)&= y^{2k} \overline{f_{k,D}(z)}
        \\&= -\frac{(2k - 1)}{D^{\frac{k}{2}}} \binom{2k - 2}{k - 1} (2k + 1)y^{2k} \sum_{Q \in \mathcal{Q}_D/\Gamma} \overline{\mathcal{C}_{4+2k}\left(\mathscr{P}_{k,-(k+2)}(z, \tau), Q\right)},
    \end{align*}
    when using Theorem  \ref{thm:Stum26Result} for $m=2.$ In order to apply Theorem \ref{thm:AlfesSchwagen}, we rewrite 
 $\mathscr{P}_{k,-(k+2)}(z,\tau)$ as a result of applying the raising operator $R_{2k+2,\tau}$ to a function $F$. By Eqn. \eqref{eq:RaisingMaassOnSerifenP}, we have
 \begin{align*}
     R_{2k+2,\tau} \mathscr{P}_{k,-(k+1)}(z,\tau)= (2k+1)\mathscr{P}_{k,-(k+2)}(z,\tau). 
 \end{align*}
 From this, we obtain
 \begin{align*}
     \mathcal{W}_{-k,D}(z)&= -\frac{k}{2D^{\frac{k}{2}}} \binom{2k}{k} y^{2k}\sum_{Q \in \mathcal{Q}_D/\Gamma} \overline{\mathcal{C}_{4+2k}\left(R_{2k+2,\tau}\mathscr{P}_{k,-(k+1)}(z, \tau), Q\right)}
     \\ &= - \frac{k}{2D^{\frac{k}{2}}} \binom{2k}{k} y^{2k}\sum_{Q \in \mathcal{Q}_D/\Gamma}  \mathcal{C}_{-2k}\left(\xi_{2k+2,\tau} \mathscr{P}_{k,-(k+1)}(z, \tau),Q\right).
 \end{align*}
 A direct computation gives
 \begin{align*}
    \xi_{2k+2,\tau}\mathscr{P}_{k,-(k+1)}(z,\tau)= -v^{2k} \overline{L_{2k+2,\tau} \mathscr{P}_{k,-(k+1)}(z,\tau)} =  v^{2k} \overline{\mathscr{P}_{k,-k}(z,\tau)}
\end{align*}
from which we get the desired result
\begin{equation*}
\mathcal{W}_{-k,D}(z)=-\frac{ky^{2k}}{2D^{\frac{k}{2}}} \binom{2k}{k}
\sum_{Q \in \mathcal{Q}_D/\Gamma} \mathcal{C}_{-2k}\left(v^{2k}\overline{\mathscr{P}_{k,-k}\left( z, \tau \right)},Q\right).
\end{equation*}

For $\omega_{k+1,D}(z)$ we proceed in a similar way. 
    We have
    \begin{align*}
       2k \omega_{k+1,D}(z)&= R_{2k,z}f_{k,D}(z)
       \\&=\frac{(2k - 1)}{D^{\frac{k}{2}}} \binom{2k - 2}{k - 1} \sum_{Q \in \mathcal{Q}_D/\Gamma}\mathcal{C}_{2k}\left(R_{2k,z}\mathscr{P}_{k,-k}(z,\tau), Q\right),
\intertext{now using that $\mathscr{P}_{k,-k}(z, \tau)=\mathscr{P}_{k,-k}(\tau,z)$ due to \cite{Stum26}*{Lemma 3.5} we get}
       &= \frac{(-1)^k k}{D^{\frac{k}{2}}} \binom{2k}{k} \sum_{Q \in \mathcal{Q}_D/\Gamma}\mathcal{C}_{2k}\left(R_{2k,z}H_{k,-k}(-\tau,z), Q\right)
       \\&= \frac{(-1)^k k}{D^{\frac{k}{2}}} \binom{2k}{k} \sum_{Q \in \mathcal{Q}_D/\Gamma}\mathcal{C}_{2k}\left(2kH_{k,-(k+1)}(-\tau,z), Q\right)
    \end{align*}
    by using Lemma \ref{lem:PeterssonPoincareRaisingLowering}. Dividing by $2k$ gives the desired result for $\omega_{k+1,D}(z)$.

    The second representation of $\mathcal{W}_{-k,D}$ is derived from the result by Löbrich and Schwagenscheidt \cite{lsmeromorphic}*{Theorem 4.2}. Using Theorem \ref{thm:RelationWf} and Eqn. \eqref{eq:CycleIntegralsCurlyF},  we get
    \begin{align*}
        \mathcal{W}_{-k,D}(z) &= D^{\frac{1}{2}-k}L_{2-2k,z}\mathcal{F}_{1-k,D}(z)\\
        &= \frac{D^{-\frac{k}{2}}}{2}\binom{2k - 2}{k - 1}L_{2-2k,z}\sum_{Q \in \mathcal{Q}_D \slash \Gamma}\mathcal{C}_{2k}(H_{k,k-1}(\tau,z),Q)\\
        &= \frac{D^{-\frac{k}{2}}}{2}\binom{2k - 2}{k - 1}\sum_{Q \in \mathcal{Q}_D \slash \Gamma}\mathcal{C}_{2k}(L_{2k - 2,z}H_{k,k-1}(\tau,z),Q)\\
        &= \frac{k}{4D^{\frac{k}{2}}}\binom{2k}{k}\sum_{Q \in \mathcal{Q}_D \slash \Gamma}\mathcal{C}_{2k}(H_{k,k}(\tau,z),Q),
    \end{align*}
    yielding the claim.
\end{proof}

\subsection{Local weak Maass forms} Similarly to above, we may capitalise on the first identity given in Theorem \ref{thm:Stum26Result} in order to prove the second main result.

\begin{proof}[Proof of Theorem \ref{thm:MainResultLocallyWeakMaassForms}]
    Note that by assumption $2k > 4$. We obtain
    \begin{align*}
        \mathcal{J}_{-1-k,D}(z) &= y^{2k+2}\overline{g_{k+1,D}(z)}\\
        &= K_2y^{2k+2} \sum_{Q \in \mathcal{Q}_D / \Gamma} \overline{\mathcal{C}_{4-2k}\left(H_{k+1,k-2}(z, \tau),Q\right)}
    \end{align*}
    by invoking Theorem \ref{thm:Stum26Result} for $m = 2$. We apply Theorem \ref{thm:AlfesSchwagen} with $F = H_{k+1,k}$, combined again with Lemma \ref{lem:PeterssonPoincareRaisingLowering} which states that
    \begin{equation}
        R_{2-2k,\tau}H_{k+1,k-1}(z,\tau) = 2H_{k+1,k-2}(z,\tau).
    \end{equation}
    Proceeding as in the proof of Theorem \ref{thm:MainResultWeakMaassForms}, we get
    \begin{align*}
        \mathcal{J}_{-1-k,D}(z) &= K_2y^{2k+2} \sum_{Q \in \mathcal{Q}_D / \Gamma} \overline{\mathcal{C}_{4-2k}\left(H_{k+1,k-2}(z, \tau),Q\right)}\\
        &= \frac{K_2}{2}y^{2k+2} \sum_{Q \in \mathcal{Q}_D / \Gamma} \overline{\mathcal{C}_{4-2k}\left(R_{2-2k,\tau}H_{k+1,k-1}(z, \tau),Q\right)}\\
        &= \frac{K_2}{2}y^{2k+2} \sum_{Q \in \mathcal{Q}_D / \Gamma} \mathcal{C}_{2k}\left(\xi_{2-2k,\tau}H_{k+1,k-1}(z, \tau),Q\right)\\
        \intertext{and, using $\xi_{\kappa,\tau} = -v^{\kappa - 2}\overline{L_{\kappa,\tau}}$, we arrive at (compare to Löbrich--Schwagenscheidt \cite{lslocallyharmonic}*{Section 3}) }
        &= -\frac{K_2}{2}y^{2k+2} \sum_{Q \in \mathcal{Q}_D / \Gamma} \mathcal{C}_{2k}\left(v^{-2k}\overline{L_{2-2k,\tau}H_{k+1,k-1}(z, \tau)},Q\right)\\
        &= -kK_2y^{2k+2} \sum_{Q \in \mathcal{Q}_D / \Gamma} \mathcal{C}_{2k}\left(v^{-2k}\overline{H_{k+1,k}(z, \tau)},Q\right),
        %\\
        %&= -kK_2y^{2k+2} \sum_{Q \in \mathcal{Q}_D / \Gamma} \mathcal{C}_{2k}\left(v^{-2k} \textcolor{blue}{H_{k+1,k}(-\overline{z},-\overline{ \tau})},Q\right)\\
        %&= -kK_2y^{2k+2} \sum_{Q \in \mathcal{Q}_D / \Gamma} \mathcal{C}_{2k}\left(-v^{-2k}H_{k+1,k}(\overline{z},\overline{ \tau}),Q\right)\\
        %&=-kK_2y^{2k+2} \sum_{Q \in \mathcal{Q}_D / \Gamma} \mathcal{C}_{2k}\left(H_{k+1,-k}(\overline{z}, \tau),Q\right)
    \end{align*}
    which is the claimed identity. 
    \end{proof}
\begin{rmk}
   Using the definition of $\iota_{k+2}(z)$ we find
       \begin{align*}
       (2k + 2) \iota_{k+2,D}(z) &=R_{2k+2}g_{k+1,D}(z)
       \\&= K_0 \sum_{Q \in \mathcal{Q}_D \slash \Gamma} \mathcal{C}_{-2k}(R_{2k+2,z}H_{k+1,k}(z,\tau),Q),
    \end{align*}
    and have
  \begin{align*}
    R_{2k+2,z} &\left(\sum_{\gamma\in\Gamma} \frac{v^{2k+1}}{(z-\tau)\left(z-\overline{\tau} \right)^{2k+1}}\bigg|_{2k+2,z} \gamma\right)= \sum_{\gamma\in\Gamma} R_{2k+2}\left(\frac{v^{2k+1}}{(z-\tau)\left(z-\overline{\tau} \right)^{2k+1}}\right)\bigg|_{2k+4,z} \gamma\\
    &= \sum_{\gamma\in\Gamma} \left(\frac{-4iv^{2k+1}(z-u)}{(z-\tau)^2\left(z-\overline{\tau} \right)^{2k+2}}+ \frac{-4ikv^{2k+1}}{(z-\tau)\left(z-\overline{\tau} \right)^{2k+2}}+ \frac{(2k+2)v^{2k+1}}{y(z-\tau)\left(z-\overline{\tau} \right)^{2k+1}}\right) \bigg|_{2k+4,z}\gamma.
\end{align*}     
\end{rmk} 

\subsection{Zagier's $f_{k,D}$ function}

The crucial tools in this last subsection will be Bol's identity
\begin{equation}\label{eq:BolsIdentityRauschStum}
	\mathbb{D}^{\kappa - 1} = (-4\pi)^{1 - \kappa}R_{2-\kappa}^{\kappa - 1}
\end{equation}
for $\kappa = 2k$ combined with

\begin{lemma}[\cite{bkk}*{Theorem 1.2}]\label{lem:bkk}
	Let $k > 2$ be even and $D>0$ be a discriminant. Then
	\begin{align*}
	\mathbb{D}^{2k - 1}\mathcal{F}_{1-k,D}(z) &= -\frac{D^{k-\frac{1}{2}}\Gamma(2k - 1)}{(4\pi)^{2k-1}}f_{k,D}(z) \qquad \text{and}\\
	\xi_{2-2k}\mathcal{F}_{1-k,D}(z) &= D^{k-\frac{1}{2}}f_{k,D}(z)
	\end{align*}
	for $z \not \in E_D$.
\end{lemma}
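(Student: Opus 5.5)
The plan is to prove both identities term by term on a fixed connected component of $\H \setminus E_D$. On such a component each $\sgn(Q_z)$ is constant, and for $k>2$ the series defining $\mathcal{F}_{1-k,D}$ and its derivatives converge absolutely and locally uniformly, so differentiating termwise is allowed. Write
\[
\psi_Q(z) := \tfrac12\sgn(Q_z)\,Q(z,1)^{k-1}\,\beta\!\left(\tfrac{Dy^2}{|Q(z,1)|^2};k-\tfrac12,\tfrac12\right).
\]
The basic algebraic input is the identity $|Q(z,1)|^2 = y^2\left(Q_z^2 + D\right)$; one can check it on $Q=[1,0,-1]$, $z=i$. It gives
\[
1 - \frac{Dy^2}{|Q(z,1)|^2} = \frac{y^2Q_z^2}{|Q(z,1)|^2},
\]
so the factor $\sqrt{1-x}$ produced by differentiating $\beta$ equals $y|Q_z|/|Q(z,1)|$. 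Multiplied by $\sgn(Q_z)$, it becomes $yQ_z/|Q(z,1)|$, and this is real-analytic across the component.

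\emph{Shadow.} Since $Q(z,1)^{k-1}$ is holomorphic, $\partial_{\overline z}$ falls only on $\beta$, giving
\[
x^{k-\frac32}(1-x)^{-\frac12}\,\partial_{\overline z}\!\left(\frac{Dy^2}{|Q(z,1)|^2}\right), \qquad x = \frac{Dy^2}{|Q(z,1)|^2}.
\]
I will compute $\partial_{\overline z}\bigl(y^2/(Q(z,1)\overline{Q(z,1)})\bigr)$ explicitly and express the numerator through $Q_z$ and $\overline{Q(z,1)}$; I expect a factor $Q_z$ to appear that cancels against $yQ_z/|Q(z,1)|$. After cancelling the powers of $|Q(z,1)|$, the expected outcome is
\[
\partial_{\overline z}\psi_Q = c\,D^{k-\frac12}\,y^{2k-2}\,\overline{Q(z,1)}^{\,-k}
\]
for an explicit constant $c$. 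Applying $\xi_{2-2k}=2iy^{2-2k}\overline{\partial_{\overline z}(\cdot)}$ then gives $D^{k-\frac12}Q(z,1)^{-k}$ provided $2i\bar c = 1$, and summing over $Q\in\mathcal{Q}_D$ yields the second formula. Pinning down the constant is routine bookkeeping.

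\emph{Bol operator.} Set $h := \mathcal{F}_{1-k,D} - D^{k-\frac12} f_{k,D}^*$. By the shadow computation and $\xi_{2-2k}(f^*)=f$, we have $\xi_{2-2k}h=0$, so $h$ is holomorphic on each component. Since $\mathbb{D}^{2k-1}(f^*)=0$, it follows that $\mathbb{D}^{2k-1}\mathcal{F}_{1-k,D} = \mathbb{D}^{2k-1}h$, and it remains to identify the holomorphic part termwise. For this I will rewrite $\beta(x;k-\frac12,\frac12) = B(k-\frac12,\frac12) - \int_x^1 t^{k-\frac32}(1-t)^{-\frac12}\,\mathrm{d}t$. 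The constant piece contributes a multiple of $\sgn(Q_z)Q(z,1)^{k-1}$, which is locally a polynomial of degree $2k-2$ and is therefore killed by $\mathbb{D}^{2k-1}$. In the remaining integral I will substitute to turn it into an integral of $(z-w)^{2k-2}Q(w,1)^{-k}$ along a path from $z$, split into a holomorphic Eichler-type piece and an antiholomorphic piece. On the holomorphic piece, $\mathbb{D}^{2k-1}$ acts as in $\mathbb{D}^{2k-1}\mathcal{E}_f=f$ and produces $-\frac{\Gamma(2k-1)}{(4\pi)^{2k-1}}D^{k-\frac12}Q(z,1)^{-k}$ up to the normalisation.

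As a cross-check, and as an alternative if the substitution becomes unwieldy, I will use Bol's identity $\mathbb{D}^{2k-1} = (-4\pi)^{1-2k}R_{2-2k}^{2k-1}$ together with the explicit value of $L_{2-2k}\psi_Q$ from the first step and the commutation relations between $R$ and $L$.

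I expect the main obstacle to be this last step: isolating the holomorphic part of each $\psi_Q$ exactly, so that the constant $-\Gamma(2k-1)/(4\pi)^{2k-1}$ (including its sign) comes out correctly. The shadow identity, by contrast, is a direct chain-rule computation once the identity for $|Q(z,1)|^2$ is in hand.
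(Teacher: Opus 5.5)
The paper does not prove this lemma; it quotes it from Bringmann--Kane--Kohnen, so your attempt has to stand on its own.

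\textbf{The shadow identity.} Your computation is correct, and the constant you leave open is $c=\tfrac{i}{2}$. One finds $\partial_{\overline z}\bigl(y^2|Q(z,1)|^{-2}\bigr)=iy^2Q_z\,Q(z,1)^{-1}\overline{Q(z,1)}^{-2}$. The factor $Q_z$ cancels against $\sgn(Q_z)(1-x)^{-1/2}=|Q(z,1)|/(yQ_z)$, and $\xi_{2-2k}\psi_Q=D^{k-\frac12}Q(z,1)^{-k}$ follows.

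\textbf{The gap: the Bol identity.} Introducing $h=\mathcal{F}_{1-k,D}-D^{k-\frac12}f_{k,D}^*$ only restates the problem. Everything that matters is left to a substitution you never specify: the holomorphic part of each $\psi_Q$, and with it the constant $-\Gamma(2k-1)/(4\pi)^{2k-1}$ and its sign.

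Your fallback cannot fill this in, even in principle. $L_{2-2k}\psi_Q$ determines $\psi_Q$ only up to an additive holomorphic function $h_0$. But $R_{2-2k}^{2k-1}h_0=(-4\pi)^{2k-1}\mathbb{D}^{2k-1}h_0$ is arbitrary; compare $f^*$ with $\mathcal{E}_f$. So no combination of $L_{2-2k}\psi_Q$ with commutation relations can pin down $\mathbb{D}^{2k-1}\psi_Q$.

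\textbf{What is missing.} You need the explicit identity, valid on each connected component of $\H\setminus E_D$,
\[
\psi_Q(z)=\frac{D^{k-\frac12}}{2i(-4)^{k-1}}\int_{\overline z}^{z}\frac{(z-w)^{2k-2}}{Q(w,1)^{k}}\,\mathrm{d}w+P_Q(z),\qquad \deg P_Q\le 2k-2,
\]
for any path in $\C$ avoiding the roots of $Q$. Other paths change the integral by residues, which are polynomials in $z$ of degree at most $2k-2$. This is what your substitution has to produce, and it gives both identities at once, applied termwise as you already justified:
\begin{itemize}
\item Treating $\overline z$ as an independent variable, $\partial_z^{2k-1}$ of the integral is $(2k-2)!\,Q(z,1)^{-k}$. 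Since $(2\pi i)^{1-2k}/\bigl(2i(-4)^{k-1}\bigr)=-(4\pi)^{1-2k}$, this yields the Bol identity with the correct constant and sign.
\item $\partial_{\overline z}$ of the integral is $-(2iy)^{2k-2}\overline{Q(z,1)}^{-k}$, which recovers $c=\tfrac i2$.
\end{itemize}

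\textbf{A quick proof of the identity.} Since $(Q\circ\gamma)_z=Q_{\gamma z}$ and $y^2/|(Q\circ\gamma)(z,1)|^2=\mathrm{Im}(\gamma z)^2/|Q(\gamma z,1)|^2$, you have $\psi_{Q\circ\gamma}=\psi_Q|_{2-2k}\gamma$ for $\gamma\in\mathrm{SL}_2(\mathbb{R})$. The integral transforms the same way, and $\mathrm{SL}_2(\mathbb{R})$ acts transitively on real forms of discriminant $D$. So you may take $Q(z,1)=\sqrt{D}\,z$.

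Write $z=re^{i\theta}$. Then $x=\sin^2\theta$ and $\beta(\sin^2\theta;k-\tfrac12,\tfrac12)=2\int_0^\theta\sin^{2k-2}\phi\,\mathrm{d}\phi$. Hence $\psi_Q(z)=D^{\frac{k-1}{2}}z^{k-1}\int_0^\theta\sin^{2k-2}\phi\,\mathrm{d}\phi$ for $\theta<\frac{\pi}{2}$. For $\theta>\frac{\pi}{2}$, the sign change and $\phi\mapsto\pi-\phi$ alter this only by a multiple of $z^{k-1}$.

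On the other hand, integrating over the arc $w=re^{i\phi}$, $-\theta\le\phi\le\theta$, gives $\int_{\overline z}^{z}(z-w)^{2k-2}w^{-k}\,\mathrm{d}w=2i(-4)^{k-1}z^{k-1}\int_0^\theta\sin^{2k-2}\phi\,\mathrm{d}\phi$, which is the identity for this model form.
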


\begin{proof}[Proof of Theorem \ref{thm:NewRepForFkD}]
	A straight-forward computation using Lemma \ref{lem:bkk}, Eqn. \eqref{eq:BolsIdentityRauschStum}, Eqn. \eqref{eq:CycleIntegralsCurlyF}, and Lemma \ref{lem:PeterssonPoincareRaisingLowering} yields
	\begin{align*}
		f_{k,D}(z) &= -\frac{(4\pi)^{2k-1}}{D^{k-\frac{1}{2}}\Gamma(2k - 1)}\mathbb{D}^{2k-1}\mathcal{F}_{1-k,D}(z)\\
		&= -\frac{(4\pi)^{2k-1}}{D^{k-\frac{1}{2}}\Gamma(2k - 1)}(-4\pi)^{1 - 2k}R_{2-2k,z}^{2k - 1}\mathcal{F}_{1-k,D}(z)\\
		&= \frac{1}{2D^{\frac{k}{2}}\Gamma(2k - 1)}\binom{2k - 2}{k - 1}R_{2-2k,z}^{2k - 1}\sum_{Q \in \mathcal{Q}_D \slash \Gamma}\mathcal{C}_{2k}(H_{k,k-1}(\tau,z),Q)\\
		&= \frac{k}{4D^{\frac{k}{2}}}\binom{2k}{k}\sum_{Q \in \mathcal{Q}_D \slash \Gamma}\mathcal{C}_{2k}(H_{k,-k}(\tau,z),Q)
	\end{align*}
	as claimed. The representation for $\omega_{k+1,D}$ then follows due to Theorem \ref{thm:BringKaneOmegaFRelation} and Lemma \ref{lem:PeterssonPoincareRaisingLowering}.
\end{proof}


\begin{bibsection}
\begin{biblist}

\bib{AlfesSchwagen}{article}{
    author={Alfes, C.},
    author={Schwangenscheidt, M.},
    title= {Identities of cycle integrals of weak Maass forms},
    year={2020},
    journal={Ramanujan J. },
    number={3},
    pages={683--688},
}

\bib{bfkr}{book}{
    author={Bringmann, K.},
    author={Folsom, A.},
    author={Ono, K.},
    author={Rolen, L.},
    title={Harmonic Maass Forms and Mock Modular Forms: Theory and Applications},
    publisher={American Mathematical Society},
    series={American Mathematical Society colloquium publications},
    volume={64},
    year={2017},
    isbn={978-1-470-41944-8},
}

\bib{BringKane}{webpage}{
    author = {Bringmann, K.},
    author = {Kane, B.},
    title = {A direct proof of Mono--Rolen--Stumpenhusen and new constructions via the Maass raising operators},
    year = {2026},
    url = {https://arxiv.org/abs/2606.27212}
}

\bib{bkk}{article}{
   author={Bringmann, K.},
   author={Kane, B.},
   author={Kohnen, W.},
   title={Locally harmonic Maass forms and the kernel of the Shintani lift},
   journal={Int. Math. Res. Not. IMRN},
   date={2015},
   number={11},
   pages={3185--3224},
   issn={1073-7928},
   doi={10.1093/imrn/rnu024},
}

\bib{brimo}{webpage}{
   author={Bringmann, K.},
   author={Mono, A.},
   title={A modular framework of functions of Knopp and indefinite binary quadratic forms},
   url={https://arxiv.org/abs/2208.01451},
   year={2022},
}

\bib{koh85}{article}{
   author={Kohnen, W.},
   title={Fourier coefficients of modular forms of half-integral weight},
   journal={Math. Ann.},
   volume={271},
   date={1985},
   number={2},
   pages={237--268},
   issn={0025-5831},
   doi={10.1007/BF01455989},
}

\bib{koza84}{article}{
   author={Kohnen, W.},
   author={Zagier, D.},
   title={Modular forms with rational periods},
   conference={
      title={Modular forms},
      address={Durham},
      date={1983},
   },
   book={
      series={Ellis Horwood Ser. Math. Appl.: Statist. Oper. Res.},
      publisher={Horwood, Chichester},
   },
   date={1984},
   pages={197--249}
}

\bib{lslocallyharmonic}{article}{
   author={L\"{o}brich, S.},
   author={Schwagenscheidt, M.},
   title={Locally harmonic Maass forms and periods of meromorphic modular
   forms},
   journal={Trans. Amer. Math. Soc.},
   volume={375},
   date={2022},
   number={1},
   pages={501--524},
   issn={0002-9947},
   doi={10.1090/tran/8528},
}

\bib{lsmeromorphic}{article}{
    author = {L\"{o}brich, S.},
    author = {Schwagenscheidt, M.},
    title = {Meromorphic Modular Forms with Rational Cycle Integrals},
    journal = {International Mathematics Research Notices},
    year= {2022},
    pages = {312--342},
    doi = {https://doi.org/10.1093/imrn/rnaa104}
}

\bib{mo21}{article}{
   author={Mono, A.},
   title={Locally harmonic Maass forms of positive even weight},
   journal={Israel J. Math.},
   volume={261},
   date={2024},
   number={2},
   pages={671--694},
}

\bib{MonoThesis}{webpage}{
    author = {Mono, A.},
    title = {Harmonic and locally harmonic Maass forms},
    year = {2023},
    url = {https://kups.ub.uni-koeln.de/75117/},
    note = {PhD thesis}
}

\bib{MonoRolenStum}{webpage}{
    author = {Mono, A.},
    author = {Rolen, L.},
    author = {Stumpenhusen, J.},
    title = {On a Divisor Modular Form and a Theta Lift},
    url = {https://arxiv.org/abs/2509.01378},
    year = {2025}
}

\bib{Stum26}{webpage}{
    author = {Stumpenhusen, J.},
    title = {Modular Forms Related to Real Quadratic Fields as Traces of Cycle Integrals},
    year= {2026}
}

\bib{zagier75}{article}{
   author={Zagier, D.},
   title={Modular forms associated to real quadratic fields},
   journal={Invent. Math.},
   volume={30},
   date={1975},
   number={1},
   pages={1--46},
   issn={0020-9910},
   doi={10.1007/BF01389846},
}

\end{biblist}
\end{bibsection}
\end{document}